\newtheorem{definition}{Definition}
\newtheorem{theorem}{Theorem}
\newtheorem{proposition}{Proposition}[section]
\newtheorem{lemma}[proposition]{Lemma}
\theoremstyle{break} 
\newenvironment{proof}%
{{\par\noindent \bf Proof. \nobreak}}%
{\nobreak \removelastskip \nobreak \hfill $\Box$ \medbreak}
{{\par\noindent \bf Proof \nobreak}}%
{\nobreak \removelastskip \nobreak \hfill $\Box$ \medbreak}
{{\par\noindent \bf Proof lemma. \nobreak}}%
{\nobreak \removelastskip \nobreak \bf End proof lemma. \medbreak}
\newenvironment{remark}{\par \medskip \noindent {\bf Remark. }\nobreak}{\par \medskip}
\def\paragraph#1{{\bf #1\ }}
\newcommand{\RN}[1]{%
  \textup{\uppercase\expandafter{\romannumeral#1}}%
}
\newcommand{\dd}{\mathrm{d}}
\newcommand{\HH}{\mathrm{H}}
\DeclareMathOperator*{\argmax}{\arg\!\max}
\title{Mean-field analysis of a random asset exchange model with probabilistic cheaters}
\author{Fei Cao \footnotemark[1]}
\begin{document}
\maketitle

\footnotetext[1]{University of Massachusetts Amherst - Department of Mathematics and Statistics, Amherst, MA 01003, USA}

\tableofcontents

\begin{abstract}
We investigate a variant of the standard Bennati-Dragulescu-Yakovenko (BDY) game \cite{dragulescu_statistical_2000} inspired by the very recent work \cite{blom_hallmarks_2024}, where agents involving in a money exchange dynamics are classified into two distinct types which are termed as probabilistic cheaters and honest players, respectively. A probabilistic cheater has a positive probability of declaring to have no money to give to other agents in the system, resulting in a potential financial benefits from being dishonest about his/her financial status. We provide a mean-field description of the agent-based model (in terms of a coupled infinite dimensional system of nonlinear ODEs), in the large population limit where the number of players is sent to infinity, and proves convergence of the coupled mean-field system to its stationary distribution (provided by a mixture of geometric distributions). In particular, the model gives rise to a novel formulation involving a mixture of probability distributions, thereby motivating the introduction of a unusual (generalized) entropy functional tailored to the associated mean-field system.
\end{abstract}

\noindent {\bf Key words: Agent-based model, Econophysics, Generalized entropy, Gini index, Mean-field limit, Ordinary differential equations}

\section{Introduction}\label{sec:1}
\setcounter{equation}{0}

In this work, we analyze a modified version of the well-known Bennati-Dragulescu-Yakovenko (BDY) game \cite{dragulescu_statistical_2000} inspired by the recent joint work of Kristian Blom, Dmitrii Makarov and Alja{\v{z}} Godec \cite{blom_hallmarks_2024}. In the original BDY game, there are $N$ identity agents labelled by $1$ through $N$ and each of them is characterized by the amount of dollars he/she has. We denote by $S_i(t)$ the amount of dollars the agent $i$ has at time $t$. The game is a simple mechanism for dollar exchange taking place in a closed economical system, where at random times (generated by an exponential law) an agent $i$ picked uniformly at random gives a dollar (if he/she has at least one dollar) to another agent $j$ again picked uniformly at random, and if agent $i$ is ruined (i.e., $S_i= 0$) then nothing happens. We can represent the BDY game as follows:
\begin{equation}\label{dynamics:BDY}
\textbf{BDY game:} \qquad (S_i,S_j)~ \begin{tikzpicture} \draw [->,decorate,decoration={snake,amplitude=.4mm,segment length=2mm,post length=1mm}]
(0,0) -- (.6,0); \node[above,red] at (0.3,0) {};\end{tikzpicture}~  (S_i-1,S_j+1) \quad (\text{if } S_i\geq 1).
\end{equation}
Since the economical system is closed, we must have
\begin{equation}\label{eq:preserved_sum}
S_1(t)+ \cdots +S_N(t) = N\,\mu = \textrm{Constant} \qquad \text{for all } t\geq 0.
\end{equation}
for some $\mu > 0$. The BDY model described above is one of the earliest models in econophysics and has been studied extensively across different communities since its inception \cite{cao_derivation_2021,lanchier_rigorous_2017,merle_cutoff_2019}. Due to the fact that all agents with at least one dollar gives to the rest of agents at a fixed rate and the game is biased towards any specific agent (or certain group of agents), the BDY game is termed as the \emph{unbiased exchange model} in \cite{cao_derivation_2021,cao_interacting_2024} and the \emph{one-coin model} in \cite{lanchier_rigorous_2017}. Subsequent extensions of the basic BDY game suggest the presence of a bank (or even multiple banks) which allows agents to be indebted (i.e., $S_i < 0$), and we refer the interested readers to a series of recent works \cite{cao_bias_2023,cao_uncovering_2022,lanchier_rigorous_2019,lanchier_distribution_2022}. Other possible variations of the BDY mechanism also include the effect of bias, which amounts to adding tax or introducing a redistribution mechanism \cite{cao_uniform_2024,chakraborti_statistical_2000,chatterjee_pareto_2004,miao_nonequilibrium_2023}, that favors poorer agents or richer agents in each economical interaction between pair of agents. For an overview of the application to kinetic theory to econophysics and sociophysics models, we refer the interested readers to \cite{cao_fractal_2024,cao_iterative_2024,cao_k_2021,cao_sticky_2024,degond_continuum_2017,during_kinetic_2008,jabin_clustering_2014,loy_essentials_2025,matthes_steady_2008,pareschi_interacting_2013,pereira_econophysics_2017,savoiu_econophysics_2013}.

In this manuscript, we aim to analyze a variant of the model introduced in a recent work \cite{blom_hallmarks_2024}, which extends the classical BDY dynamics in the sense that the concept of probabilistic cheaters is introduced and therefore the model has two distinct types of agents. The settings of our model can be described as follows. Suppose that there are $N$ agents in total (labelled from $1$ to $N$) and each agent is either a honest player or a probabilistic cheater, denote by $n_h \in [0,1]$ and $n_c \in [0,1]$ the fraction of honest players and probabilistic cheaters in this economic system, respectively, with $n_h + n_c = 1$. We denote by $\mathcal{H}$ and $\mathcal{C}$ the collection of honest players and probabilistic cheaters, respectively. For the sake of concreteness and without loss of generality, we assume that $\mathcal{H} = \{1,\ldots,n_h\,N\}$ and $\mathcal{C} = \{1,\ldots,N\} \setminus \mathcal{H}$. At random times generated by an exponential law, we pick two different agents $i$ (``giver'') and $j$ (``receiver'') uniformly at random from $\{1,\ldots,N\}^2 \setminus \{i=j\}$ and update their wealth status according to the following rules:
\begin{enumerate}[label=(\roman*)]
  \item If $i \in \mathcal{H}$ (i.e., if agent $i$ is a honest player) and $S_i \geq 1$, then agent $i$ gives one dollar to agent $j$:
      \begin{equation}\label{dynamics:main}
(S_i,S_j)~ \begin{tikzpicture} \draw [->,decorate,decoration={snake,amplitude=.4mm,segment length=2mm,post length=1mm}]
(0,0) -- (.6,0); \node[above,red] at (0.3,0) {};\end{tikzpicture}~  (S_i-1,S_j+1);
\end{equation}
  \item If $i \in \mathcal{C}$ (i.e., if agent $i$ is a probabilistic cheater) and $S_i \geq 1$, then agent $i$ gives one dollar to agent $j$ with probability $1-\gamma$ for some fixed $\gamma \in (0,1)$, i.e., the transaction \eqref{dynamics:main} occurs with probability $1-\gamma$ and nothing happens with probability $\gamma$;
  \item If agent $i$ has no money in his/her pocket, then nothing happens.
\end{enumerate}
We remark here that the model parameter $\gamma \in [0,1)$ in the aforementioned binary dollar exchange dynamics can be interpreted as the probability to lie for a probabilistic cheater \cite{blom_hallmarks_2024}, since by pretending to have no money (with probability $\gamma$) a probabilistic cheater (with at least one dollar) can avoid transferring his/her wealth to other agents when he/she is picked to give out a dollar.
We also emphasize a major difference of our model from the model purposed in \cite{blom_hallmarks_2024}: it is assumed in \cite{blom_hallmarks_2024} that the exchange \eqref{dynamics:main} occurs (with probability $1$) whenever $S_j = 0$ and $S_i \geq 1$, i.e., in their model set-up a ``receiver'' with zero money will always receive a dollar from a ``giver'' as long as the ``giver'' (agent $i$) has at least one dollar, while for the model investigated in this manuscript, a probabilistic cheater will always give out a dollar with probability $1-\gamma$ regardless of the wealth status of the ``receiver'' (agent $j$).

We illustrate the dynamics in figure \ref{fig:illustration_model}. A central question in econophysics literature often revolves around identifying the limiting distribution of money among agents as the number of agents and the time horizon approach infinity. We demonstrate numerically the simulation results of our variant of the BDY dynamics involving probabilistic cheaters using $N=2000$ agents with the initial condition $S_i(0) = \mu$ for all $1\leq i \leq N$ in figure \ref{fig:agent_based_simulation}-left. Other model parameters employed in the agent-based simulation are $\mu =5$, $n_c = n_h =0.5$, $\gamma = 0.5$.

We observe in figure \ref{fig:agent_based_simulation_2} that the distributions of money among both the group $\mathcal{H}$ of honest players and the group $\mathcal{C}$ of probabilistic cheaters are approximately geometric (although with different parameters), as predicted by Proposition \ref{prop:equil}.

\begin{figure}[!htb]
\centering
\includegraphics[width = 0.6\textwidth]{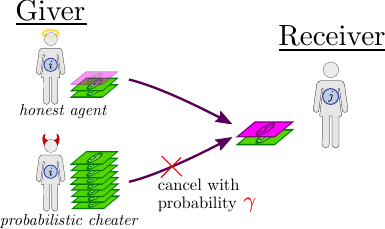}
\caption{Illustration of a variant of the BDY game involving probabilistic cheaters: at random times, a ``giver'' $i$ picked uniformly at random is selected to give one dollar to a ``receiver'' $j$ chosen uniformly at random as well. If agent $i$ has no dollar (i.e., $S_i = 0$) then nothing happens. Otherwise (i.e., $S_i\geq 1$), agent $i$ will give one dollar to agent $j$ if agent $i$ is an honest player (or equivalently $i \in \mathcal{H}$), and agent $i$ will give one dollar to agent $j$ with probability $1-\gamma$ if agent $i$ is a probabilistic cheater (i.e., $i \in \mathcal{C}$).}
\label{fig:illustration_model}
\end{figure}

\begin{figure}[!htb]
  \begin{subfigure}{0.47\textwidth}
    \centering
    \includegraphics[width=\textwidth,height=0.3\textheight]{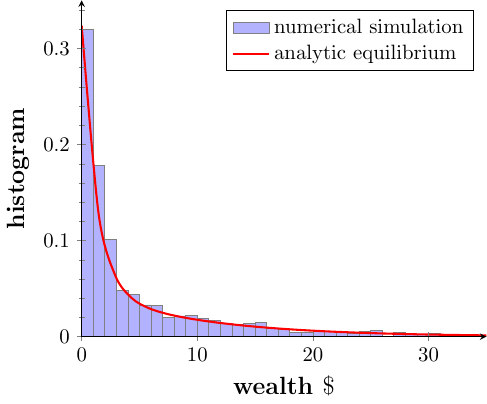}
  \end{subfigure}
  \hspace{0.1in}
  \begin{subfigure}{0.47\textwidth}
    \centering
    \includegraphics[width=\textwidth,height=0.3\textheight]{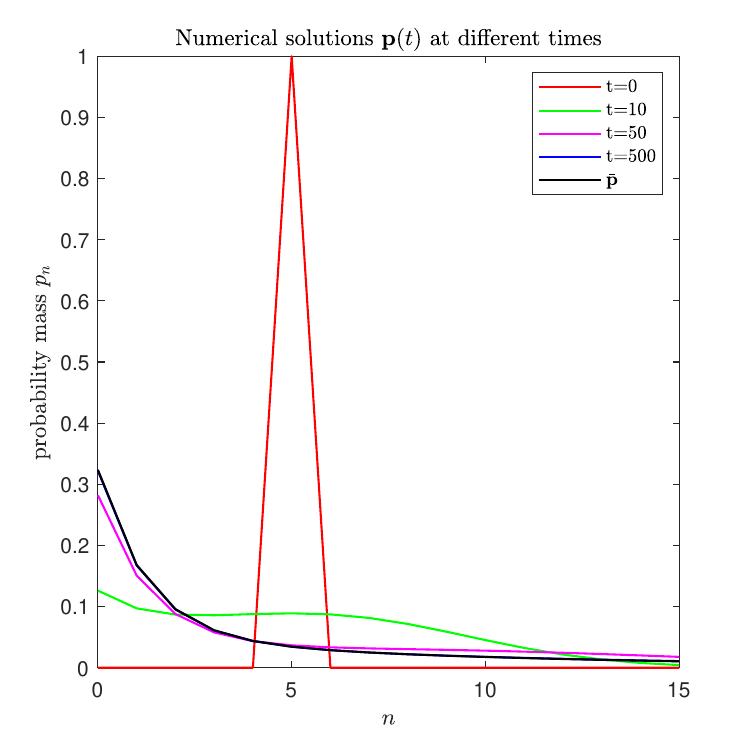}
  \end{subfigure}
  \caption{{\bf Left:} Distribution of money for the agent-based model with $N = 2,000$ agents after $20,000$ units of time, using $\mu =5$, $n_c = n_h =0.5$, $\gamma = 0.5$ and the initial condition $S_i(0) = \mu$ for all $1\leq i \leq N$. {\bf Right:} Simulation of the coupled mean-field ODE systems \eqref{eq:law_limit_honest} and \eqref{eq:law_limit_cheater} for $0\leq t\leq 500$, using $\mu =5$, $n_c = n_h =0.5$, $\gamma = 0.5$ and the initial condition ${\bf p}^h(0) = {\bf p}^c(0) = \delta_\mu$, where $\delta_\mu \in \mathcal{P}(\mathbb N)$ is Dirac-type distribution whose only non-zero component is located at its $(\mu+1)$-th coordinate. We observe that in both scenarios the terminal distributions are well-approximated by a convex combination of geometric distributions given by \eqref{eq:equilibria} below.}
  \label{fig:agent_based_simulation}
\end{figure}

\begin{figure}[!htb]
  \begin{subfigure}{0.47\textwidth}
    \centering
    \includegraphics[scale=0.8]{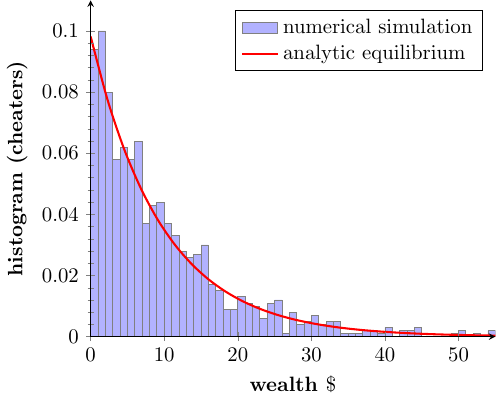}
  \end{subfigure}
  \hspace{0.1in}
  \begin{subfigure}{0.47\textwidth}
    \centering
    \includegraphics[scale=0.8]{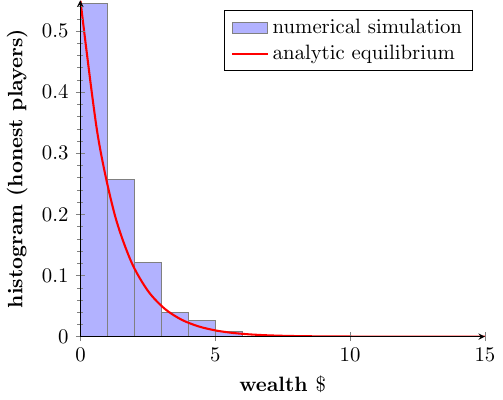}
  \end{subfigure}
  \caption{Distribution of money for the agent-based model with $N = 2,000$ agents after $20,000$ units of time, using $\mu =5$, $n_c = n_h =0.5$, $\gamma = 0.5$ and the initial condition $S_i(0) = \mu$ for all $1\leq i \leq N$. {\bf Left:} Wealth distribution among probabilistic cheaters.  {\bf Right:} Wealth distribution among honest players. We emphasize that for both sub-systems the large-time distributions are close to a geometric distribution given by ${\bf p}^c$ and ${\bf p}^h$ \eqref{eq:equilibria}, respectively.}
  \label{fig:agent_based_simulation_2}
\end{figure}

The remainder of this paper is organized as follows: in section \ref{sec:2} we provide the associated mean-field description of our agent-based dynamics, in the form a coupled infinite system of nonlinear ODEs \eqref{eq:law_limit_honest}-\eqref{eq:law_limit_cheater}, under the large population limit $N \to \infty$. We also explicitly identify the equilibrium distribution of money across the entire population, as well as within the sub-populations consisting solely of honest players and probabilistic cheaters, respectively. Section \ref{sec:3} is dedicated to the long-time analysis of the mean-field ODE system introduced in section \ref{sec:2}, with the primary objective of establishing its convergence to the corresponding equilibrium distribution. In particular, we introduce a novel generalized entropy functional tailored to the coupled mean-field system \eqref{eq:law_limit_honest}-\eqref{eq:law_limit_cheater}, and establish a non-trivial variational characterization of its equilibrium distribution (given by a convex combination of geometric distributions). In section \ref{sec:4} we briefly examine the impact of the cheating probability $\gamma$ on the wealth inequality of the equilibrium distribution (measured by Gini index), while keeping all other model parameters fixed. Finally, section \ref{sec:5} concludes the manuscript by outlining several potential directions for future research building upon the model introduced in this work.

\section{Mean-field system of nonlinear ODEs}\label{sec:2}
\setcounter{equation}{0}

At the mean-field level (as the number $N$ of agents goes to infinity), it is quite natural \cite{cao_bias_2023,cao_derivation_2021,cao_uncovering_2022} to describe the system via a probability mass function ${\bf p}(t)=\left(p_0(t),p_1(t),\ldots,p_n(t),\ldots\right)$, in which $p_n(t)$ represents the probability that a typical agent (i.e., an agent picked uniformly at random from the pool of $N$ agents) has $n$ dollars at time $t$. However, since in our model a generic agent has probability $n_h$ (probability $n_c = 1-n_h$, respectively) to be a honest player (a probabilistic cheater, respectively), a more convenient way of describing the system after passing to the large population limit $N \to \infty$ consists of a pair of distributions ${\bf p}^h(t) = \{p^h_n(t)\}_{n\geq 0}$ and ${\bf p}^c(t) = \{p^c_n(t)\}_{n\geq 0}$, where $p^h_n(t)$ ($p^c_n(t)$, respectively) denotes the proportion of agents among all honest players (among all probabilistic cheaters, respectively) having $n$ dollars at time $t$. In particular, we have
\begin{equation}\label{eq:p_vector}
{\bf p}(t) = n_c\cdot {\bf p}^c(t) + n_h\cdot {\bf p}^h(t).
\end{equation}
A standard mean-field type argument, similar to the one presented in \cite{blom_hallmarks_2024}, shows that the evolutions of ${\bf p}^h(t) = \{p^h_n(t)\}_{n\geq 0}$ and ${\bf p}^c(t) = \{p^c_n(t)\}_{n\geq 0}$ are described by the following coupled system of nonlinear ODEs:
\begin{equation}\label{eq:law_limit_honest}
\frac{\dd}{\dd t} {\bf p}^h(t) = \mathcal{L}^h[{\bf p}^h(t)],
\end{equation}
and
\begin{equation}\label{eq:law_limit_cheater}
\frac{\dd}{\dd t} {\bf p}^c(t) = \mathcal{L}^c[{\bf p}^c(t)],
\end{equation}
where
\begin{equation}\label{eq:Lh}
  \mathcal{L}^h[{\bf p}^h]_n \coloneqq \left\{
    \begin{array}{ll}
      p^h_1 - r\,p^h_0 &~ \text{for}~~ n = 0, \\
      p^h_{n+1} + r\,p^h_{n-1} - p^h_n - r\,p^h_n &~ \text{for}~~ n\geq 1,\\
    \end{array}
  \right.
\end{equation}
and
\begin{equation}\label{eq:Lc}
  \mathcal{L}^c[{\bf p}^c]_n \coloneqq \left\{
    \begin{array}{ll}
      (1-\gamma)\,p^c_1 - r\,p^c_0 &~ \text{for}~~ n = 0, \\
      (1-\gamma)\,p^c_{n+1} + r\,p^c_{n-1} - (1-\gamma)\,p^c_n - r\,p^c_n &~ \text{for}~~ n\geq 1,\\
    \end{array}
  \right.
\end{equation}
and
\begin{equation}\label{eq:def_r}
r \coloneqq n_c\,r_c\,(1-\gamma) + n_h\,r_h,~~r_h \coloneqq \sum\limits_{n\geq 1} p^h_n,~~r_c \coloneqq \sum\limits_{n\geq 1} p^c_n,
\end{equation}
in which $r_h$ and $r_c$ represent the proportion of agents who have at least one dollar among all honest players and among all probabilistic cheaters, respectively.

\begin{remark}
In the special case where $n_c = 0$ (and hence $n_h = 1$), i.e., when all agents in the system are honest players and no probabilistic cheaters are present, the ODE system \eqref{eq:law_limit_cheater} for probabilistic cheaters disappears and the mean-field dynamics \eqref{eq:law_limit_honest} for ${\bf p}^h(t)$ (or equivalently for ${\bf p}(t)$) coincides with the mean-field model corresponding to the classical BDY game \cite{cao_derivation_2021,dragulescu_statistical_2000}.
\end{remark}

In order to justify rigorously the passage from the stochastic $N$-agent model investigated in this work to the coupled infinite system of ODEs \eqref{eq:law_limit_honest} and \eqref{eq:law_limit_cheater} as $N \to \infty$, one need to prove the so-called \emph{propagation of chaos} \cite{chaintron_propagation_2022,chaintron_propagation_2022_partII,sznitman_topics_1991} which is beyond the scope of this manuscript. On the other hand, the propagation of chaos phenomenon has been proved rigorously for a number of econophysics models, and we refer the interested readers to \cite{cao_derivation_2021,cao_explicit_2021,cao_interacting_2024,cao_uniform_2024}.

Next, we collect some elementary observations regarding the solution of the ODE systems \eqref{eq:law_limit_honest} and \eqref{eq:law_limit_cheater}.
\begin{lemma}\label{lem:invariant}
Assume that ${\bf p}^h(t)=\{p^h_n(t)\}_{n\geq 0}$ and ${\bf p}^c(t) = \{p^c_n(t)\}_{n\geq 0}$ is a classical solution of \eqref{eq:law_limit_honest} and \eqref{eq:law_limit_cheater}, respectively, with ${\bf p}^c(0) \in \mathcal{P}(\mathbb N)$ and ${\bf p}^h(0) \in \mathcal{P}(\mathbb N)$ such that ${\bf p}(0) = n_c\cdot {\bf p}^c(0) + n_h\cdot {\bf p}^h(0)$ has mean $\mu$, then
\begin{equation}\label{eq:conservation_mass_mean_value}
\sum_{n=0}^\infty \mathcal{L}^h[{\bf p}^h]_n =0,\quad \sum_{n=0}^\infty \mathcal{L}^c[{\bf p}^c]_n =0,\quad \textrm{and} \quad \sum_{n=0}^\infty \left(n_h\,n\,\mathcal{L}^h[{\bf p}^h]_n + n_c\,n\,\mathcal{L}^c[{\bf p}^c]_n\right)=0.
\end{equation}
In particular, for all times $t\geq 0$ it holds that
\begin{equation}\label{eq:conservation_mass_mean_value_repeat}
\sum_{n=0}^\infty {\bf p}^h_n(t) = 1,\quad \sum_{n=0}^\infty {\bf p}^c_n(t) = 1,\quad \textrm{and} \quad n_h\,\sum_{n=0}^\infty n\,p^h_n(t) + n_c\,\sum_{n=0}^\infty n\,p^c_n(t) = \mu.
\end{equation}
\end{lemma}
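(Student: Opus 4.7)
The plan is to verify the three identities in \eqref{eq:conservation_mass_mean_value} by direct algebraic manipulation on the operators $\mathcal{L}^h$ and $\mathcal{L}^c$, and then to transfer these identities to the time-dependent statement \eqref{eq:conservation_mass_mean_value_repeat} by integrating in time and invoking the classical solution assumption.

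For the first two identities, I would split the sum as $\mathcal{L}^h[{\bf p}^h]_0 + \sum_{n \geq 1} \mathcal{L}^h[{\bf p}^h]_n$ and handle each of the four contributions $p^h_{n+1}$, $r\,p^h_{n-1}$, $p^h_n$, $r\,p^h_n$ separately with index shifts $m=n+1$ and $m=n-1$. The sums telescope: the shifted tails $\sum_{n\geq 1} p^h_{n+1} - \sum_{n\geq 1} p^h_n$ collapse to $-p^h_1$, while $r\sum_{n\geq 1} p^h_{n-1} - r\sum_{n\geq 1} p^h_n$ collapses to $r\,p^h_0$, and both cancel exactly against the $n=0$ boundary term $p^h_1 - r\,p^h_0$. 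The computation for $\mathcal{L}^c$ is identical up to replacing $p^h_1$ by $(1-\gamma)\,p^c_1$ and $p^h_n$ by $(1-\gamma)\,p^c_n$ on the outflow side, so the same telescoping argument yields $\sum_{n\geq 0}\mathcal{L}^c[{\bf p}^c]_n=0$.

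For the third identity, the $n=0$ term vanishes automatically because of the factor $n$, so I only need to treat $\sum_{n\geq 1} n\,\mathcal{L}^h[{\bf p}^h]_n$ and $\sum_{n\geq 1} n\,\mathcal{L}^c[{\bf p}^c]_n$. Using the same index shifts and writing $n = (n+1)-1$ or $n=(n-1)+1$ as appropriate, I expect
\begin{equation*}
\sum_{n\geq 0} n\,\mathcal{L}^h[{\bf p}^h]_n = r - r_h, \qquad \sum_{n\geq 0} n\,\mathcal{L}^c[{\bf p}^c]_n = r - (1-\gamma)\,r_c.
\end{equation*}
Multiplying by $n_h$ and $n_c$ respectively and summing gives $r\,(n_h + n_c) - \bigl(n_h\,r_h + n_c\,(1-\gamma)\,r_c\bigr) = r - r = 0$, where the final equality is exactly the definition \eqref{eq:def_r} of $r$ combined with $n_h+n_c=1$. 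This cancellation is the heart of the lemma: the nonlinear coupling through $r$ is designed precisely so that the aggregate mean is conserved even though neither sub-population individually conserves its own mean.

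Finally, to deduce \eqref{eq:conservation_mass_mean_value_repeat} from \eqref{eq:conservation_mass_mean_value}, I would integrate each coordinate of \eqref{eq:law_limit_honest}-\eqref{eq:law_limit_cheater} in time, sum against the test functions $1$ and $n$, and use the classical solution hypothesis to exchange the time derivative with the infinite sum. The main technical obstacle is justifying this interchange and the absolute convergence of the shifted sums for the third identity (where the sums $\sum n\,p^h_n$ and $\sum n\,p^c_n$ appear), but this is handled by the standing assumption that ${\bf p}^h(t),\,{\bf p}^c(t)$ are classical solutions with finite mean at $t=0$ --- an a priori bound that propagates since the three identities in \eqref{eq:conservation_mass_mean_value} prevent the first moment from blowing up. Once the exchange is justified, taking the time derivative of $\sum_n p^h_n$, $\sum_n p^c_n$ and $n_h\sum_n n\,p^h_n + n_c\sum_n n\,p^c_n$ produces zero by \eqref{eq:conservation_mass_mean_value}, so these quantities remain equal to their initial values $1$, $1$, and $\mu$ for all $t\geq 0$.
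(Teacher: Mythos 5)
Your proposal is correct and is exactly the ``straightforward computation'' that the paper declines to write out (it omits the proof of this lemma entirely): the telescoping of the shifted sums, the moment identities $\sum_{n} n\,\mathcal{L}^h[{\bf p}^h]_n = r - r_h$ and $\sum_{n} n\,\mathcal{L}^c[{\bf p}^c]_n = r - (1-\gamma)\,r_c$ (which the paper itself uses verbatim later, in the proof of Theorem \ref{thm:main}), and the final cancellation via the definition \eqref{eq:def_r} of $r$ together with $n_h + n_c = 1$. No discrepancies with the paper's approach.
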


The proof of Lemma \ref{lem:invariant} follows from straightforward computations and hence will be omitted. We only emphasize here that one can interpret $\sum_{n=0}^\infty n\,p^h_n$ and $\sum_{n=0}^\infty n\,p^c_n$ as the average amount of dollars that a typical honest player and a typical probabilistic cheater has, respectively.

Thanks to Lemma \ref{lem:invariant}, we deduce that ${\bf p}(t) \in V_\mu$ for all $t\geq 0$, where
\begin{equation}\label{eq:space}
V_\mu \coloneqq \left\{{\bf p} \mid {\bf p}=n_c\cdot {\bf p}^c + n_h\cdot {\bf p}^h;~ {\bf p}^h, {\bf p}^c \in \mathcal{P}(\mathbb N);~  n_h\,\sum_{n=0}^\infty n\,p^h_n + n_c\,\sum_{n=0}^\infty n\,p^c_n = \mu\right\}.
\end{equation}
We are now ready to identify the equilibrium distribution associated with the dynamics \eqref{eq:law_limit_honest} and \eqref{eq:law_limit_cheater}, respectively.

\begin{proposition}\label{prop:equil}
Under the settings of Lemma \ref{lem:invariant}, denote by $\bar{{\bf p}}^h$ and $\bar{{\bf p}}^c$ the equilibrium solution to \eqref{eq:law_limit_honest} and \eqref{eq:law_limit_cheater}, respectively. Then for all $n \in \mathbb N$,
\begin{equation}\label{eq:equilibria}
\bar{p}^h_n = \bar{p}^h_0\,\bar{r}^n \quad \textrm{and} \quad \bar{p}^c_n = \bar{p}^c_0\,\left(\frac{\bar{r}}{1-\gamma}\right)^n,
\end{equation}
where $\bar{p}^h_0 = 1 - \bar{r}$, $\bar{p}^c_0 = 1 - \frac{\bar{r}}{1-\gamma}$, and
\begin{equation}\label{eq:r_bar}
\bar{r} = \frac{1}{2\,(\mu+1)}\,\left[(2-\gamma)\,\mu + (1-\gamma\,n_h) - \sqrt{((2-\gamma)\,\mu + (1-\gamma\,n_h))^2 - 4\,(1-\gamma)\,(\mu+1)\,\mu}\right]
\end{equation}
is the unique solution within $(0,1-\gamma)$ which satisfies
\begin{equation}\label{eq:quadratic}
(\mu+1)\,\bar{r}^2 - \left[(2-\gamma)\,\mu + (1-\gamma\,n_h)\right]\,\bar{r} + (1-\gamma)\,\mu = 0.
\end{equation}
\end{proposition}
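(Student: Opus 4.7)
The plan is to exploit the fact that the nonlinear coupling between the two mean-field equations enters only through the single scalar $\bar{r}$ defined in \eqref{eq:def_r}. If one temporarily pretends $\bar{r}$ is a known positive constant, the two stationary equations $\mathcal{L}^h[\bar{\bf p}^h]=0$ and $\mathcal{L}^c[\bar{\bf p}^c]=0$ decouple into two linear constant-coefficient recurrences that can be solved in closed form. Feeding the resulting geometric laws back into the definition of $\bar{r}$ and into the mean constraint from Lemma \ref{lem:invariant} then yields the scalar equation that determines $\bar{r}$.

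More concretely, the first step is to introduce the auxiliary sequence $u_n \coloneqq \bar{p}^h_{n+1}-\bar{r}\,\bar{p}^h_n$. The boundary relation $\mathcal{L}^h[\bar{\bf p}^h]_0=0$ reads exactly $u_0=0$, while the bulk relations $\mathcal{L}^h[\bar{\bf p}^h]_n=0$ for $n\geq 1$ rewrite as $u_n=u_{n-1}$. Thus $u_n\equiv 0$ and $\bar{p}^h_n=\bar{r}^n\,\bar{p}^h_0$. Running the same argument on the cheater side with $v_n \coloneqq (1-\gamma)\,\bar{p}^c_{n+1}-\bar{r}\,\bar{p}^c_n$ yields $\bar{p}^c_n=\bigl(\bar{r}/(1-\gamma)\bigr)^n\,\bar{p}^c_0$. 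For these sequences to define probability measures in $\mathcal{P}(\mathbb{N})$ one needs $\bar{r}\in(0,1-\gamma)$, and the normalization then forces $\bar{p}^h_0=1-\bar{r}$ and $\bar{p}^c_0=1-\bar{r}/(1-\gamma)$, establishing \eqref{eq:equilibria}. A quick consistency check shows that, with these values, $r_h=\bar{r}$ and $r_c=\bar{r}/(1-\gamma)$, so the self-referential definition $\bar{r}=n_c r_c(1-\gamma)+n_h r_h$ collapses to the tautology $\bar{r}=(n_c+n_h)\bar{r}=\bar{r}$ and therefore does not single out $\bar{r}$ by itself.

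The determining condition for $\bar{r}$ must therefore come from the conservation of total mean in \eqref{eq:conservation_mass_mean_value_repeat}. Using the standard geometric-series formulas to compute the means of $\bar{\bf p}^h$ and $\bar{\bf p}^c$ gives
\begin{equation*}
n_h\,\frac{\bar{r}}{1-\bar{r}} \;+\; n_c\,\frac{\bar{r}}{1-\gamma-\bar{r}} \;=\; \mu,
\end{equation*}
and clearing denominators (using $n_h+n_c=1$) produces exactly the quadratic \eqref{eq:quadratic}. To isolate the admissible root, I would evaluate the polynomial $f(\bar{r})\coloneqq(\mu+1)\bar{r}^2-[(2-\gamma)\mu+(1-n_h\gamma)]\bar{r}+(1-\gamma)\mu$ at the endpoints of $(0,1-\gamma)$: a direct computation gives $f(0)=(1-\gamma)\mu>0$ and $f(1-\gamma)=-\gamma\,n_c\,(1-\gamma)\leq 0$. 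Since the leading coefficient is positive, $f$ has exactly one root in $(0,1-\gamma)$, and the quadratic formula identifies it as the smaller root, which is precisely the expression in \eqref{eq:r_bar}.

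I do not anticipate any serious obstacle: the single subtlety is recognizing that the fixed-point equation for $\bar{r}$ coming from \eqref{eq:def_r} is redundant once the geometric form has been established, so the genuine constraint that closes the system is the mean-value conservation and not the consistency of $\bar{r}$ itself. Once this observation is made, everything else reduces to elementary recurrence manipulations and an intermediate value theorem argument.
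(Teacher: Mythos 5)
Your proof is correct and follows essentially the same route as the paper: solve the stationary recurrences to obtain the geometric profiles, normalize to fix $\bar{p}^h_0$ and $\bar{p}^c_0$, observe that the defining relation for $\bar{r}$ becomes tautological at equilibrium, and close the system with the mean constraint to arrive at the quadratic \eqref{eq:quadratic}. The only minor divergence is in isolating the admissible root: you use the endpoint signs $f(0)=(1-\gamma)\mu>0$ and $f(1-\gamma)=-\gamma\,n_c\,(1-\gamma)\leq 0$ together with the intermediate value theorem, whereas the paper verifies positivity of the discriminant via an AM--GM estimate and then appeals to Vieta's formulas; both are elementary and your endpoint computation is arguably the more direct of the two.
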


\begin{proof}
Setting $\mathcal{L}^h[{\bf p}^h]_n = 0$ and $\mathcal{L}^c[{\bf p}^c]_n = 0$ for each $n\in \mathbb N$ leads us to \eqref{eq:equilibria}. As $\bar{{\bf p}}^h \in \mathcal{P}(\mathbb N)$ and $\bar{{\bf p}}^c \in \mathcal{P}(\mathbb N)$, we deduce that $\bar{p}^h_0 = 1 - \bar{r}$ and $\bar{p}^c_0 = 1 - \frac{\bar{r}}{1-\gamma}$. Since $\bar{{\bf p}} \coloneqq n_c\cdot \bar{{\bf p}}^c + n_h\cdot \bar{{\bf p}}^h$ lives in $V_\mu$ \eqref{eq:space}, we also have
\begin{equation*}
\mu = n_c\,\frac{\bar{r}}{1-\gamma - \bar{r}} + n_h\,\frac{\bar{r}}{1-\bar{r}} = (1-n_h)\,\frac{\bar{r}}{1-\gamma - \bar{r}} + n_h\,\frac{\bar{r}}{1-\bar{r}},
\end{equation*}
which is equivalent to \eqref{eq:quadratic} after simple rearrangements. To finish the proof it remains to show that the quadratic equation in $\bar{r}$ has two real and distinct roots (and as an easy consequence of Vieta's formulas, the equation \eqref{eq:quadratic} admits a unique solution within $(0,1-\gamma)$). To this end, it suffices to notice that
\begin{align*}
(2-\gamma)\,\mu + (1-\gamma\,n_h) &> \mu + (1-\gamma)\,\mu + (1-\gamma) \\
&= \mu + (1-\gamma)\,(\mu + 1) \geq 2\,\sqrt{\mu+1}\,\sqrt{(1-\gamma)\,\mu}.
\end{align*}
\end{proof}

\begin{remark}
Under the settings of the previous remark, i.e., in the special case when $(n_c,n_h) = (0,1)$, the equilibrium distribution $\bar{{\bf p}}^h$ (or equivalently $\bar{{\bf p}}$) simplifies to a geometric distribution with mean $\mu$ (i.e., $\bar{p}_n = \frac{1}{1+\mu}\left(\frac{\mu}{1+\mu}\right)^n$ for all $n\in \mathbb N$), which corresponds to the equilibrium distribution for the standard BDY model \cite{cao_derivation_2021,dragulescu_statistical_2000}.
\end{remark}

\begin{remark}
At equilibrium, since
\begin{equation}\label{eq:observation}
\sum_{n\geq 0} n\,\bar{{\bf p}}^h_n = \frac{\bar{r}}{1-\bar{r}} < \sum_{n\geq 0} n\,\bar{{\bf p}}^c_n = \frac{\bar{r}}{1-\gamma - \bar{r}},
\end{equation}
we thus conclude that probabilistic cheaters have a ``collective advantage'' in the sense that, on average, a typical probabilistic cheater is wealthier than a typical honest player at the mean-field level when $t \to \infty$.
\end{remark}

\section{Large time behavior}\label{sec:3}
\setcounter{equation}{0}

\subsection{Convergence to mixtures of geometric distributions}\label{subsec:3.1}
In this subsection, we focus on the problem of showing large time convergence of solutions of the coupled nonlinear ODE systems \eqref{eq:law_limit_honest} and \eqref{eq:law_limit_cheater} to their unique equilibrium solutions $\bar{{\bf p}}^h$ and $\bar{{\bf p}}^c$, respectively. For this purpose, we naturally search for certain energy functionals which are monotonic when evaluated along the evolution equations \eqref{eq:law_limit_honest} and \eqref{eq:law_limit_cheater}. In other words, a key ingredient which we will rely on is the construction of an appropriate Lyapunov functional associated to the evolution systems \eqref{eq:law_limit_honest} and \eqref{eq:law_limit_cheater}. As it turns out, the following energy functional, defined for a pair of distributions $({\bf f},{\bf g}) \in \mathcal{P}(\mathbb N) \times \mathcal{P}(\mathbb N)$, is monotonically increasing along classical solutions of \eqref{eq:law_limit_honest} and \eqref{eq:law_limit_cheater}:
\begin{equation}\label{eq:energy}
\HH[({\bf f},{\bf g})] \coloneqq -n_c\,\sum_{n\geq 0} f_n\,\log f_n - n_h\,\sum_{n\geq 0} g_n\,\log g_n - n_c\,\log(1-\gamma)\,\sum_{n\geq 0} n\,f_n.
\end{equation}
In other words, we claim that $\HH[({\bf p}^c(t), {\bf p}^h(t))]$ increases as time $t$ increases. The underlying principle which leads us to the construction of the above generalized $\HH$ functional is encoded in the following non-trivial variational charaterization of the joint geometric distribution pair $(\bar{{\bf p}}^c, \bar{{\bf p}}^h)$, which is of independent interest:

\begin{lemma}[Variational characterization of the geometric pair $(\bar{{\bf p}}^c, \bar{{\bf p}}^h)$]
\label{lem:max_entropy_characterization}
Let \[\mathcal{A}_\mu = \left\{({\bf f},{\bf g}) \in \mathcal{P}(\mathbb N) \times \mathcal{P}(\mathbb N) \mid n_c\,\sum_{n\geq 0} n\,f_n + n_h\,\sum_{n\geq 0} n\,g_n = \mu \right\},\] then we have
\begin{equation}\label{eq:max_entropy_characterization}
(\bar{{\bf p}}^c, \bar{{\bf p}}^h) = \argmax_{({\bf f},{\bf g}) \in \mathcal{A}_\mu} \HH[({\bf f},{\bf g})].
\end{equation}
\end{lemma}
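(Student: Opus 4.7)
The plan is to reveal a hidden structure of $\HH$ on $\mathcal{A}_\mu$: up to an additive constant that is independent of $({\bf f},{\bf g})$, the functional equals minus a non-negative combination of Kullback--Leibler divergences against the target pair $(\bar{{\bf p}}^c,\bar{{\bf p}}^h)$. Concretely, I would establish the identity
\[
\HH[({\bf f},{\bf g})] = -n_c\,\KL({\bf f}\,\|\,\bar{{\bf p}}^c) - n_h\,\KL({\bf g}\,\|\,\bar{{\bf p}}^h) - n_c\log\bar{p}^c_0 - n_h\log\bar{p}^h_0 - \mu\log\bar{r},
\]
valid for every $({\bf f},{\bf g}) \in \mathcal{A}_\mu$. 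Once this is in hand, the lemma follows instantly: the last three terms are constants, both KL divergences are non-negative and vanish simultaneously if and only if ${\bf f}=\bar{{\bf p}}^c$ and ${\bf g}=\bar{{\bf p}}^h$, so the maximizer is forced to be $(\bar{{\bf p}}^c,\bar{{\bf p}}^h)$. This pair belongs to $\mathcal{A}_\mu$ by Proposition \ref{prop:equil}, since the defining quadratic \eqref{eq:quadratic} is exactly the mean constraint rewritten in terms of $\bar{r}$.

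The verification of the identity is a direct calculation that I would carry out by substituting $\sum f_n \log f_n = \KL({\bf f}\,\|\,\bar{{\bf p}}^c)+\sum f_n \log \bar{p}^c_n$ and the analogous expression for $\sum g_n \log g_n$ into \eqref{eq:energy}. The explicit form \eqref{eq:equilibria} gives $\log \bar{p}^c_n = \log\bar{p}^c_0 + n\log\bar{r} - n\log(1-\gamma)$ and $\log \bar{p}^h_n = \log \bar{p}^h_0 + n\log\bar{r}$. The point of the unusual $-n_c\log(1-\gamma)\sum n f_n$ summand in the definition of $\HH$ is exactly that it cancels the $+n\log(1-\gamma)$ contribution coming from the cheater expansion; what remains is $-\log\bar{r}\,[n_c\sum n f_n + n_h\sum n g_n]$, which collapses to $-\mu\log\bar{r}$ by the mean constraint built into $\mathcal{A}_\mu$. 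This is precisely the algebraic miracle that singles out the weighting by $-n_c\log(1-\gamma)$ in the generalized entropy \eqref{eq:energy}: it is the Lagrange-multiplier correction that makes the mixture of geometrics with the two different rates $\bar{r}$ and $\bar{r}/(1-\gamma)$ a critical point under a single shared multiplier for the total mean.

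The calculation is quite short once the right regrouping is spotted, so I do not anticipate a technical obstacle beyond mild summability bookkeeping: the tails of $\bar{{\bf p}}^c$ and $\bar{{\bf p}}^h$ decay geometrically with ratios $\bar{r}/(1-\gamma)<1$ and $\bar{r}<1$ respectively (by Proposition \ref{prop:equil}), while $n_c\sum n f_n + n_h\sum n g_n = \mu<\infty$ by hypothesis, so $\sum f_n \log \bar{p}^c_n$ and $\sum g_n \log \bar{p}^h_n$ are absolutely convergent. If $\HH[({\bf f},{\bf g})] = -\infty$ (which can occur when ${\bf f}$ or ${\bf g}$ has infinite Shannon entropy), the inequality is trivial; otherwise the rewriting above is fully rigorous and the conclusion follows from the strict positivity of KL divergence between distinct probability measures.
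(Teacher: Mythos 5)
Your argument is correct, and it takes a genuinely different route from the paper. The paper's own proof is a (sketched) application of the method of Lagrange multipliers: it forms the Lagrangian with three multipliers for the two normalization constraints and the joint mean constraint and identifies the geometric pair as the stationary point; strictly speaking that only produces a critical point, and global optimality/uniqueness is left implicit (it would follow from concavity of $\HH$ on the convex set $\mathcal{A}_\mu$, which the paper does not spell out). Your route is the Gibbs-inequality one: the identity
\[
\HH[({\bf f},{\bf g})] = -n_c\,\KL({\bf f}\,\|\,\bar{{\bf p}}^c) - n_h\,\KL({\bf g}\,\|\,\bar{{\bf p}}^h) - n_c\log\bar{p}^c_0 - n_h\log\bar{p}^h_0 - \mu\log\bar{r}
\]
does hold on $\mathcal{A}_\mu$ --- I checked the cancellation: expanding $\log\bar{p}^c_n = \log\bar{p}^c_0 + n\log\bar{r} - n\log(1-\gamma)$ produces a term $+\,n_c\log(1-\gamma)\sum_n n f_n$ that exactly annihilates the last summand in \eqref{eq:energy}, and the residual $-\log\bar{r}\,(n_c\sum_n n f_n + n_h\sum_n n g_n)$ collapses to $-\mu\log\bar{r}$ by the constraint --- and from there strict positivity of the Kullback--Leibler divergence between distinct laws gives the unique global maximizer with no infinite-dimensional stationarity caveats. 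What your approach buys is rigor (global optimality and uniqueness come for free) and a conceptual explanation of the otherwise mysterious weight $-n_c\log(1-\gamma)$ in \eqref{eq:energy}; what the Lagrangian approach buys is a recipe for \emph{discovering} $\HH$ in the first place. One small correction to your closing remark: on $\mathcal{A}_\mu$ the functional $\HH$ is always finite and non-negative, since each of its three summands is $\geq 0$ (note $-\log(1-\gamma)>0$) and a law on $\mathbb N$ with finite mean has finite Shannon entropy (compare it against the geometric law of the same mean); so the degenerate case you flag as $\HH = -\infty$ cannot occur --- the case to guard against would have been $\HH = +\infty$, and it is excluded by the mean constraint whenever $n_c, n_h > 0$.
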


\begin{proof}
The proof of Lemma \ref{lem:max_entropy_characterization} relies primarily on a standard application of the method of Lagrange multipliers so we just present a sketch of the proof. Let us introduce the relevant Lagrangian defined by
\begin{equation*}
\begin{aligned}
\mathcal{L}[({\bf f},{\bf g})] &= \HH[({\bf f},{\bf g})] - \lambda_1\,\left(\sum_{n\geq 0} f_n - 1\right) - \lambda_2\,\left(\sum_{n\geq 0} g_n - 1\right) \\
&\qquad - \lambda_3\,\left(n_c\,\sum_{n\geq 0} n\,f_n + n_h\,\sum_{n\geq 0} n\,g_n - \mu\right),
\end{aligned}
\end{equation*}
in which $\lambda_1$, $\lambda_2$ and $\lambda_3$ are Lagrange multipliers. By setting $\frac{\partial \mathcal{L}}{\partial f_n} = 0 $, $\frac{\partial \mathcal{L}}{\partial g_n} = 0$ for all $n \in \mathbb N$, and $\frac{\partial \mathcal{L}}{\partial \lambda_i} = 0$ for $1\leq i\leq 3$ lead us to the advertised conclusion \eqref{eq:max_entropy_characterization}.
\end{proof}

\begin{remark}
In the special case where $n_c = 0$ and $n_h = 1$, the functional $\HH[({\bf f},{\bf g})]$ boils down to the (Shannon) entropy of the law ${\bf g} \in \mathcal{P}(\mathbb N)$, whence the content of Lemma \ref{lem:max_entropy_characterization} simplifies to the well-known maximum entropy characterization of the geometric distribution \cite{cover_elements_1999} (i.e., the geometric distribution with mean $\mu$, defined by $\bar{p}_n = \frac{1}{1+\mu}\left(\frac{\mu}{1+\mu}\right)^n$ for all $n\in \mathbb N$, maximizes the entropy over all probabilities on $\mathbb N$ with a prescribed mean value $\mu \in \mathbb{R}_+$). Consequently, we may view Lemma \ref{lem:max_entropy_characterization} as the maximum entropy characterization of the joint geometric distributions $(\bar{{\bf p}}^c, \bar{{\bf p}}^h)$, which generalizes the standard maximum entropy characterization of a geometric distribution.
\end{remark}

We are now ready to prove the main result in this section, regarding the monotonicity (with respect to time $t$) of the generalized $\HH$ functional $\HH[({\bf p}^c(t), {\bf p}^h(t))]$ along the evolution systems \eqref{eq:law_limit_honest} and \eqref{eq:law_limit_cheater}. The production of an appropriate entropy functional along the solution trajectory of certain Boltzmann-type (kinetic) equations is clearly reminiscent of the celebrated Boltzmann's $\HH$ theorem from classical statistical physics \cite{villani_review_2002,villani_entropy_2006}.

\begin{theorem}[Entropy production and convergence to equilibrium]
\label{thm:main}
Under the settings of Lemma \ref{lem:invariant}, for all $t\geq 0$ it holds that
\begin{equation}\label{eq:dissipation_E}
\begin{aligned}
0&\leq \frac{\dd}{\dd t} \HH[({\bf p}^c, {\bf p}^h)] \\
&= (1-\gamma)\,n_c\,\sum_{n\geq 0}\left(p^c_{n+1}-\frac{r\,p^c_n}{1-\gamma}\right)\log\frac{p^c_{n+1}}{r\,p^c_n/(1-\gamma)} + n_h\,\sum_{n\geq 0}\left(p^h_{n+1}- r\,p^h_n\right)\log\frac{p^h_{n+1}}{r\,p^h_n}.
\end{aligned}
\end{equation}
Moreover, $\frac{\dd}{\dd t} \HH[({\bf p}^c(t), {\bf p}^h(t))] = 0$ if and only if the pair $({\bf p}^c,{\bf p}^h)$ coincides with $(\bar{{\bf p}}^c,\bar{{\bf p}}^h)$ \eqref{eq:equilibria}. Consequently, ${\bf p}^c(t) \xrightarrow{t\to \infty} \bar{{\bf p}}^c$ and ${\bf p}^h(t) \xrightarrow{t\to \infty} \bar{{\bf p}}^h$ component-wisely.
\end{theorem}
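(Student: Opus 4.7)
The plan is to prove \eqref{eq:dissipation_E} by a direct differentiation of $\HH$ along the coupled flow \eqref{eq:law_limit_honest}--\eqref{eq:law_limit_cheater}, exploiting the conservative structure of $\mathcal{L}^h$ and $\mathcal{L}^c$, and then obtain the convergence statement via a LaSalle-type invariance argument anchored on Lemma \ref{lem:max_entropy_characterization}. To organize the calculation I would introduce the discrete fluxes
\[
J^h_n \coloneqq p^h_{n+1} - r\,p^h_n, \qquad J^c_n \coloneqq (1-\gamma)\,p^c_{n+1} - r\,p^c_n,
\]
with the convention $J^h_{-1} = J^c_{-1} = 0$, so that $\mathcal{L}^h[{\bf p}^h]_n = J^h_n - J^h_{n-1}$ and $\mathcal{L}^c[{\bf p}^c]_n = J^c_n - J^c_{n-1}$ uniformly for $n\geq 0$. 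This conservation form is precisely what makes discrete summation by parts the natural tool.

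Differentiating \eqref{eq:energy} and using Lemma \ref{lem:invariant} to discard the constant-in-$\log$ contributions, Abel summation on $-n_h \sum_n \dot p^h_n \log p^h_n$ and $-n_c \sum_n \dot p^c_n \log p^c_n$ produces $n_h \sum_n J^h_n \log(p^h_{n+1}/p^h_n)$ and $n_c \sum_n J^c_n \log(p^c_{n+1}/p^c_n)$, respectively, while the linear moment term transforms into $n_c \log(1-\gamma)\sum_n J^c_n$ via the telescoping identity $\sum_n n(J^c_n - J^c_{n-1}) = -\sum_n J^c_n$. Merging the $\log(1-\gamma)$ into the cheater log-ratio, then inserting and subtracting $\log r$ in both log-arguments to produce the target forms $\log\tfrac{p^h_{n+1}}{r\,p^h_n}$ and $\log\tfrac{p^c_{n+1}}{r\,p^c_n/(1-\gamma)}$, leaves behind the residual
\[
\log r \cdot \bigl[n_h (r_h - r) + n_c ((1-\gamma) r_c - r)\bigr] = \log r \cdot \bigl[n_h r_h + n_c (1-\gamma) r_c - r\bigr] = 0,
\]
which vanishes by the very definition \eqref{eq:def_r} of $r$. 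This is the one non-obvious algebraic step, and it encodes precisely the mean-field coupling between the two sub-populations. Non-negativity of the resulting sum is then immediate from the elementary inequality $(x-y)\log(x/y)\geq 0$.

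Equality in \eqref{eq:dissipation_E} holds termwise iff $p^h_{n+1} = r\,p^h_n$ and $(1-\gamma)\,p^c_{n+1} = r\,p^c_n$ for every $n \geq 0$, forcing both marginals to be geometric; the normalization and the mean constraint encoded in $V_\mu$ then collapse to the quadratic \eqref{eq:quadratic}, whose unique admissible root is $\bar r$, so by Proposition \ref{prop:equil} the pair must coincide with $(\bar{{\bf p}}^c, \bar{{\bf p}}^h)$. For the convergence statement, $\HH[({\bf p}^c(t), {\bf p}^h(t))]$ is non-decreasing and bounded above by $\HH[(\bar{{\bf p}}^c, \bar{{\bf p}}^h)]$ thanks to Lemma \ref{lem:max_entropy_characterization}, so it admits a limit $\HH^\ast$; uniform tightness of the trajectory follows from the time-invariant first moment in \eqref{eq:conservation_mass_mean_value_repeat}, and hence any sequence $t_k \to \infty$ has a component-wise convergent subsequence with limit $({\bf q}^c, {\bf q}^h)$. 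A LaSalle invariance argument, propagating this limit by continuous dependence on data and using that $\HH$ remains pinned at $\HH^\ast$ along the limiting trajectory, forces the entropy production to vanish identically there, whence $({\bf q}^c, {\bf q}^h) = (\bar{{\bf p}}^c, \bar{{\bf p}}^h)$ and the full limit follows from uniqueness of the accumulation point. I expect this last step to be the principal obstacle: the entropy-dissipation identity is clean and self-contained, whereas upgrading the Lyapunov structure to component-wise convergence on the infinite-dimensional simplex $\mathcal{P}(\mathbb N)$ demands some care with tightness and with the continuity of the flow on tight sets.
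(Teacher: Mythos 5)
Your proposal is correct and follows essentially the same route as the paper: direct differentiation of $\HH$ along the flow, discrete summation by parts (the paper reindexes directly rather than introducing the fluxes $J^h_n$, $J^c_n$, but the computation is identical), and the same key cancellation of the residual $\log r$ terms via the definition \eqref{eq:def_r} of $r$, followed by the same termwise characterization of the equality case. The only divergence is the final convergence claim, where the paper simply defers to the references treating the standard BDY case while you sketch a LaSalle/tightness argument; your sketch is consistent with that strategy and your flagging of this step as the delicate one is accurate.
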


\begin{proof}
We evaluate the time derivative of each of the terms appearing in the definition of $\HH[({\bf p}^c(t), {\bf p}^h(t))]$ separately. Straightforward computations yield that
\begin{align*}
\frac{\dd}{\dd t} \sum_{n\geq 0} p^h_n\,\log p^h_n &= \sum_{n\geq 0} \left[p^h_{n+1} + r\,\mathbbm{1}\{n\geq 1\}\,p^h_{n-1} - \mathbbm{1}\{n\geq 1\}\,p^h_n - r\,p^h_n\right]\,\log p^h_n \\
&= \sum_{n\geq 0} \left[r\,p^h_n - p^h_{n+1}\right]\,\log \frac{p^h_{n+1}}{p^h_n} \\
&= -\sum_{n\geq 0} \left[p^h_{n+1} - r\,p^h_n\right]\,\log \frac{p^h_{n+1}}{r\,p^h_n} + (r-r_h)\,\log r
\end{align*}
and in a similar fashion that
\begin{align*}
\frac{\dd}{\dd t} \sum_{n\geq 0} p^c_n\,\log p^c_n &= \sum_{n\geq 0} \left[(1-\gamma)\,p^c_{n+1} + r\,\mathbbm{1}\{n\geq 1\}\,p^c_{n-1} - \mathbbm{1}\{n\geq 1\}\,(1-\gamma)\,p^c_n - r\,p^c_n\right]\,\log p^c_n \\
&= (1-\gamma)\,\sum_{n\geq 0} \left[\frac{r\,p^c_n}{1-\gamma} - p^c_{n+1}\right]\,\log \frac{p^c_{n+1}}{p^c_n} \\
&= -(1-\gamma)\,\sum_{n\geq 0} \left[p^c_{n+1} - \frac{r\,p^c_n}{1-\gamma}\right]\,\log \frac{p^c_{n+1}}{\frac{r\,p^c_n}{1-\gamma}} + \left(r-(1-\gamma)\,r_c\right)\,\log \frac{r}{1-\gamma}.
\end{align*}
Moreover, we can readily verify that
\begin{equation*}
\frac{\dd}{\dd t} \sum_{n\geq 0} n\,p^h_n = r - r_h \quad \textrm{and} \quad \frac{\dd}{\dd t} \sum_{n\geq 0} n\,p^c_n = r - (1-\gamma)\,r_c.
\end{equation*}
Assembling these identities lead us to the announced result \eqref{eq:dissipation_E}. Now thanks to \eqref{eq:dissipation_E}, we deduce that $\frac{\dd}{\dd t} \HH[({\bf p}^c(t), {\bf p}^h(t))] = 0$ if and only if $p^c_{n+1} = \frac{r\,p^c_n}{1-\gamma}$ and $p^h_{n+1} = r\,p^h_n$ for all $n\in \mathbb N$, or equivalently $({\bf p}^c, {\bf p}^h) = (\bar{{\bf p}}^c,\bar{{\bf p}}^h)$ \eqref{eq:equilibria}. Finally, the large time convergence guarantees ${\bf p}^c(t) \xrightarrow{t\to \infty} \bar{{\bf p}}^c$ and ${\bf p}^h(t) \xrightarrow{t\to \infty} \bar{{\bf p}}^h$ stated in Theorem \ref{thm:main} can be established in a similar way as for the standard mead-field BDY dynamics where $n_c = 0$ (i.e., without the presence of probabilistic cheaters), see for instance \cite{merle_cutoff_2019,cao_derivation_2021}.
\end{proof}

We demonstrate numerically the convergence of ${\bf p}(t)$ to its equilibrium distribution $\bar{{\bf p}} = n_c\cdot \bar{{\bf p}}^c + n_h\cdot \bar{{\bf p}}^h$ in figure \ref{fig:agent_based_simulation}-right, with the following model parameters: $\mu =5$, $n_c = n_h =0.5$, $\gamma = 0.5$. To discretize the ODE system, we employ $500$ components to describe the distribution ${\bf p}(t)$ (i.e., ${\bf p}(t) \approx (p_0(t),\ldots,p_{500}(t))$). As initial condition, we use $p_{\mu}(0)= 1$ and $p_n(0) = 0$ for all $n \neq \mu$. The standard Runge-Kutta fourth-order scheme is used to discretize the ODE systems \eqref{eq:law_limit_honest} and \eqref{eq:law_limit_cheater} with the time step $\Delta t=0.01$.
We plot in figure \ref{fig:ODE_simulation}-left and figure \ref{fig:ODE_simulation}-right the evolution of the numerical solutions ${\bf p}^c$ and ${\bf p}^h$ at different times for $0\leq t\leq 500$, respectively. It can be observed that convergence to a geometric distribution occur in each of the sub-systems.

\begin{figure}[!htb]
  \begin{subfigure}{0.47\textwidth}
    \centering
    \includegraphics[scale=0.6]{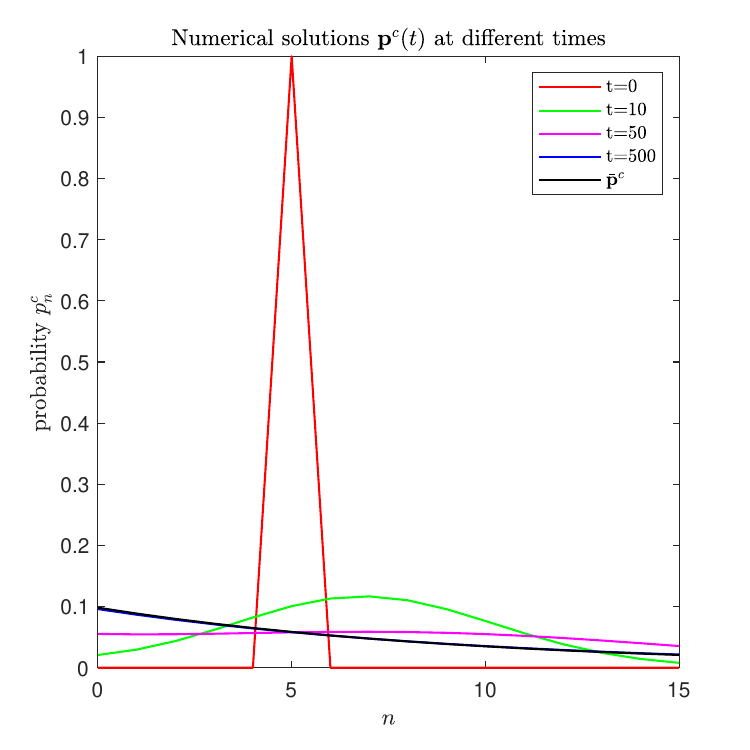}
  \end{subfigure}
  \hspace{0.1in}
  \begin{subfigure}{0.47\textwidth}
    \centering
    \includegraphics[scale=0.6]{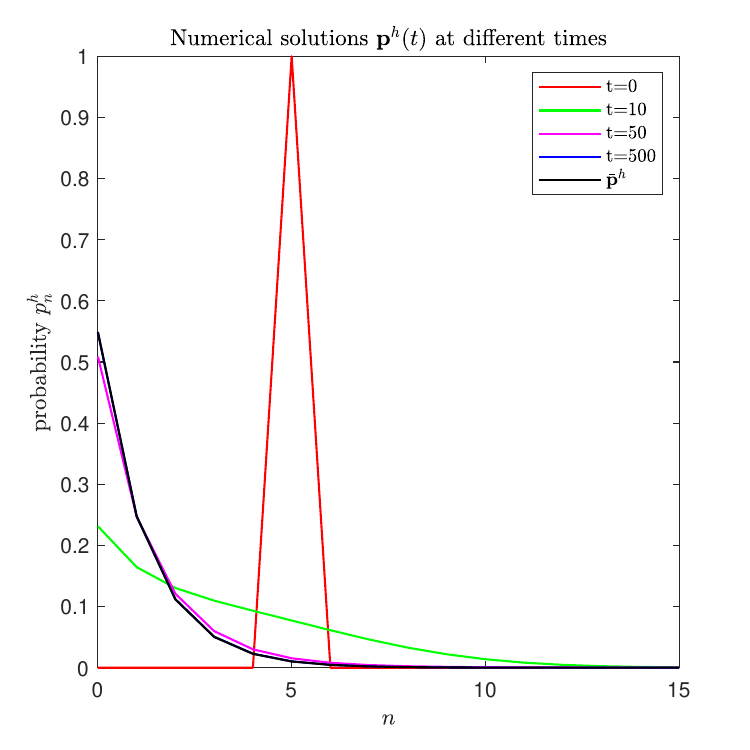}
  \end{subfigure}
  \caption{{\bf Left}: Evolution of the numerical solution ${\bf p}^c(t)$ of the ODE system \eqref{eq:law_limit_cheater} at various times. {\bf Right}: Evolution of the numerical solution ${\bf p}^h(t)$ of the ODE system \eqref{eq:law_limit_honest} at various times. For both sub-systems, ${\bf p}^c(t=500)$ and ${\bf p}^h(t=500)$ are almost indistinguishable from their respective geometric equilibrium distributions $\bar{{\bf p}}^c$ and $\bar{{\bf p}}^h$.}
  \label{fig:ODE_simulation}
\end{figure}

To illustrate the production of the entropy-like $\HH$ functional $\HH[({\bf p}^c, {\bf p}^h)]$ numerically, we use the same set-up as in the previous experiment shown in figure \ref{fig:ODE_simulation}. Figure \ref{fig:evolution_H}-left shows the evolution of $\HH[({\bf p}^c, {\bf p}^h)]$ over $0\leq t \leq 500$ and figure \ref{fig:evolution_H}-right shows the evolution of $\HH[(\bar{{\bf p}}^c, \bar{{\bf p}}^h)] - \HH[({\bf p}^c, {\bf p}^h)]$, both of which provide numerical evidence towards the monotonicity and convergence of the $\HH$ functional towards its equilibrium value.

\begin{figure}[!htb]
  \begin{subfigure}{0.47\textwidth}
    \centering
    \includegraphics[scale=0.6]{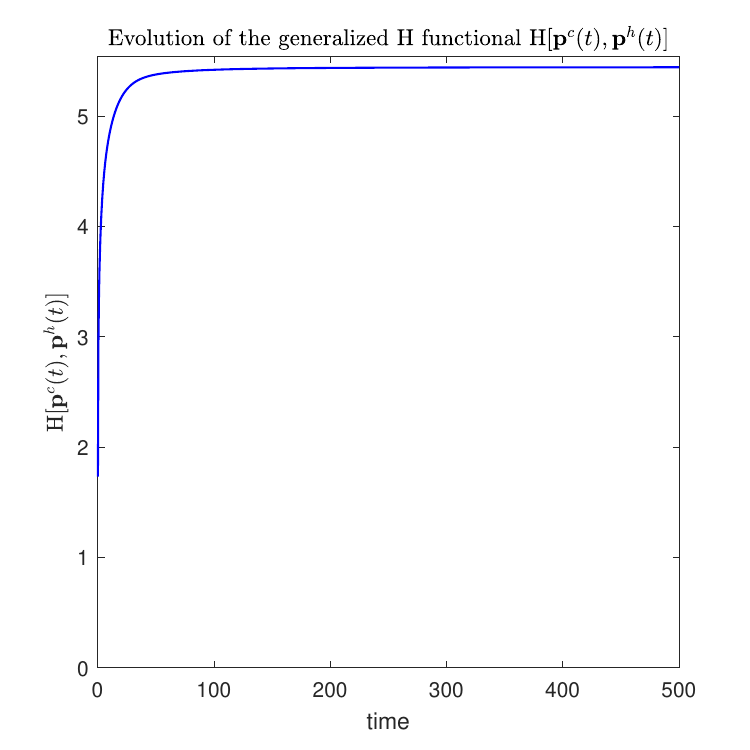}
  \end{subfigure}
  \hspace{0.1in}
  \begin{subfigure}{0.47\textwidth}
    \centering
    \includegraphics[scale=0.6]{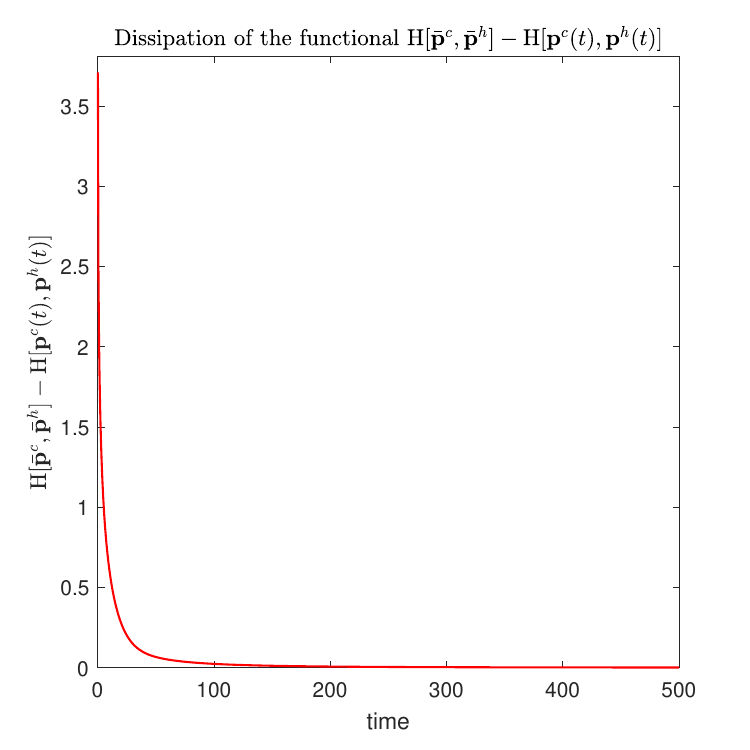}
  \end{subfigure}
  \caption{{\bf Left}: Evolution of the entropy-like $\HH$ functional $\HH[({\bf p}^c, {\bf p}^h)]$ over $0\leq t \leq 500$. {\bf Right}: Evolution of $\HH[(\bar{{\bf p}}^c, \bar{{\bf p}}^h)] - \HH[({\bf p}^c, {\bf p}^h)]$ over $0\leq t \leq 500$.}
  \label{fig:evolution_H}
\end{figure}

\subsection{Linearization around the equilibrium}\label{subsec:3.2}


In this section we intend to carry out a standard linearization analysis near the equilibrium distribution given by the geometric pair $(\bar{{\bf p}}^c, \bar{{\bf p}}^h)$. For this purpose, we linearize the systems \eqref{eq:law_limit_cheater} and \eqref{eq:law_limit_honest} around $\bar{{\bf p}}^c$ and $\bar{{\bf p}}^h$ respectively, by setting $p^c_n = \bar{p}^c_n + \varepsilon\,w^c_n$ and $p^h_n = \bar{p}^h_n + \varepsilon\,w^h_n$ for $0< |\varepsilon| \ll 1$ and for all $n \in \mathbb N$. This leads to the following coupled systems of linearized ODEs satisfied by ${\bf w} = ({\bf w}^c, {\bf w}^h)$:
\begin{equation}\label{eq:w^c}
\left\{
\begin{array}{ll}
\frac{\dd}{\dd t} w^c_0 = (1-\gamma)\,w^c_1 - \bar{r}\,w^c_0 - r_{{\bf w}}\,\bar{p}^c_0, & \\
\frac{\dd}{\dd t} w^c_n = (1-\gamma)\,w^c_{n+1} + \bar{r}\,w^c_{n-1} - (1-\gamma)\,w^c_n - \bar{r}\,w^c_n + r_{{\bf w}}\,(\bar{p}^c_{n-1}-\bar{p}^c_n), &~ \text{for}~~ n\geq 1\\
    \end{array}
  \right.
\end{equation}
and
\begin{equation}\label{eq:w^h}
\left\{
\begin{array}{ll}
\frac{\dd}{\dd t} w^h_0 = w^h_1 - \bar{r}\,w^h_0 - r_{{\bf w}}\,\bar{p}^h_0, & \\
\frac{\dd}{\dd t} w^h_n = w^h_{n+1} + \bar{r}\,w^h_{n-1} - w^h_n - \bar{r}\,w^h_n + r_{{\bf w}}\,(\bar{p}^h_{n-1}-\bar{p}^h_n), &~ \text{for}~~ n\geq 1\\
    \end{array}
  \right.
\end{equation}
in which $r_{{\bf w}} \colon = n_c\,(1-\gamma)\,\sum_{n\geq 1} w^c_n + n_h\,\sum_{n\geq 1} w^h_n$. Meanwhile, it is straightforward to check that the (admissible perturbation) pair $({\bf w}^c, {\bf w}^h)$ satisfies $({\bf w}^c(t), {\bf w}^h(t)) \in \mathcal{A}$ where
\[\mathcal{A} = \left\{({\bf w}^c,{\bf w}^h) \in \ell^1(\mathbb N) \times \ell^1(\mathbb N) \mid \sum_{n\geq 0} w^c_n = 0; \sum_{n\geq 0} w^h_n = 0; n_h\,\sum_{n\geq 0} n\,w^h_n + n_c\,\sum_{n\geq 0} n\,w^c_n = 0\right\}\]
for all $t \geq 0$, thus we also have $r_{{\bf w}} = -n_c\,(1-\gamma)\,w^c_0 - n_h\,w^h_0$. Linearizing the $\HH$ functional \eqref{eq:energy} (or more precisely the functional $-\HH$) suggests the following candidate Lyapunov functional associated with the coupled linear systems \eqref{eq:w^c}-\eqref{eq:w^h}:
\begin{equation}\label{eq:energy_E}
\mathcal{E}\left[({\bf w}^c, {\bf w}^h)\right] \coloneqq n_c\,\sum_{n\geq 0} \frac{(w^c_n)^2}{\bar{p}^c_n} + n_h\,\sum_{n\geq 0} \frac{(w^h_n)^2}{\bar{p}^h_n}.
\end{equation}
Our main result in this section is to establish the (monotone in time) dissipation of the linearized energy functional $\mathcal{E}$ \eqref{eq:energy_E} along solutions of \eqref{eq:w^c}-\eqref{eq:w^h}. We start with the following generic observation related to a geometric distribution.

\begin{lemma}\label{lem:elementary}
Assume that ${\bf p} = (p_0,p_1,\ldots)$ is a geometric distribution with $p_n = (1-r)\,r^n$ for all $n \in \mathbb N$ where $r \in (0,1)$. Then for any ${\bf y} \in \ell^1(\mathbb N)$ which satisfies $\sum_{n\geq 0} y_n = 0$ and $\sum_{n\geq 0} n\,y_n = 0$, it holds that
\begin{equation*}
y^2_0 \leq (1-r)\,r^2\,\sum_{n\geq 0} \frac{y^2_n}{p_n}
\end{equation*}
\end{lemma}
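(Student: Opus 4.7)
The plan is to exploit both linear constraints on $\mathbf{y}$ to rewrite $y_0$ as a weighted inner product, apply the Cauchy--Schwarz inequality, and optimize the resulting bound over a free parameter. From $\sum_{n \geq 0} y_n = 0$ we obtain $y_0 = -\sum_{n\geq 1} y_n$, and then adding an arbitrary multiple of the (vanishing) sum $\sum_{n \geq 1} n y_n = 0$ yields the one-parameter family of identities
\begin{equation*}
y_0 \;=\; \sum_{n \geq 1} (c\, n - 1)\, y_n, \qquad c \in \mathbb{R}.
\end{equation*}
The free parameter $c$ will be optimized at the end to produce the sharp constant.

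Applying the Cauchy--Schwarz inequality with weights $p_n$ then gives
\begin{equation*}
y_0^2 \;\leq\; \Bigl(\sum_{n \geq 1} (c\, n - 1)^2\, p_n\Bigr) \Bigl(\sum_{n \geq 1} \frac{y_n^2}{p_n}\Bigr).
\end{equation*}
Using the elementary moments of the geometric distribution
\begin{equation*}
\sum_{n \geq 1} p_n = r, \qquad \sum_{n \geq 1} n\, p_n = \frac{r}{1-r}, \qquad \sum_{n \geq 1} n^2\, p_n = \frac{r(1+r)}{(1-r)^2},
\end{equation*}
the prefactor $c \mapsto \sum_{n \geq 1}(cn - 1)^2 p_n$ is a convex quadratic in $c$, minimized at $c^{*} = (1-r)/(1+r)$ with minimum value $r^2/(1+r)$; hence
\begin{equation*}
y_0^2 \;\leq\; \frac{r^2}{1+r}\,\sum_{n \geq 1} \frac{y_n^2}{p_n}.
\end{equation*}

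To promote the partial sum $\sum_{n \geq 1}$ to the full sum $\sum_{n \geq 0}$ with the sharp constant, I would use $\sum_{n \geq 1} y_n^2/p_n = \sum_{n \geq 0} y_n^2/p_n - y_0^2/(1-r)$, substitute, and solve for $y_0^2$. Using the algebraic identity $1 + \frac{r^2}{(1-r)(1+r)} = \frac{1}{(1-r)(1+r)}$, the coefficient in front of $\sum_{n \geq 0} y_n^2/p_n$ collapses exactly to $(1-r)\,r^2$, yielding the stated inequality.

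The only obstacle is bookkeeping: verifying the geometric moment formulas, correctly completing the square in $c$, and carefully tracking the $y_0^2/p_0$ contribution when moving it across the inequality. No deeper idea is needed, and the constant is sharp, with equality attained (up to scaling) at $y_n \propto p_n\bigl(\delta_{n,0} + (r^2-1) + (1-r)^2 n\bigr)$.
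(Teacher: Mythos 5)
Your proposal is correct and follows essentially the same route as the paper: write $y_0=\sum_{n\geq 1}(c\,n-1)\,y_n$ using both constraints, apply the weighted Cauchy--Schwarz inequality, and optimize over $c$ (the paper's $\lambda$), arriving at the same optimizer $c^*=(1-r)/(1+r)$ and constant $r^2/(1+r)$. You are in fact slightly more explicit than the paper in the final bookkeeping step that converts the sum over $n\geq 1$ into the full sum over $n\geq 0$ and collapses the constant to $(1-r)\,r^2$.
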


\begin{proof}
Let $\lambda \in \mathbb R$ and write $y_0 = \sum_{n\geq 0} (\lambda\,n-1)\,y_n$. Denote by $m = \sum_{n\geq 1} n\,p_n = \frac{r}{1-r}$ the mean value of the geometric distribution ${\bf p}$, the classical Cauchy-Schwarz inequality leads us to
\begin{equation*}
y^2_0 \leq \sum_{n\geq 1} (\lambda\,n-1)^2\,p_n\,\sum_{n\geq 1} \frac{y^2_n}{p_n} = \left(\lambda^2\,(m + 2\,m^2) - 2\,\lambda\,m + r\right)\sum_{n\geq 1} \frac{y^2_n}{p_n}.
\end{equation*}
Optimizing the choice of $\lambda$ yields that $\lambda = \frac{1}{1+2m}$, which gives rise to the advertised bound.
\end{proof}

\begin{proposition}\label{prop:energy_dissipation}
Assume that ${\bf w} = ({\bf w}^c, {\bf w}^h)$ is a solution of the linear ODE systems \eqref{eq:w^c}-\eqref{eq:w^h}, then for all $t\geq 0$ we have
\begin{equation*}
\frac{\dd}{\dd t} \mathcal{E}\left[({\bf w}^c, {\bf w}^h)\right] \leq 0.
\end{equation*}
\end{proposition}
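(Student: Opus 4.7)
The plan is to differentiate $\mathcal{E}$ along the flow, reorganize via summation by parts into a Dirichlet-form dissipation plus a correction coming from the coupling term $r_{{\bf w}}$, and absorb that correction by Cauchy-Schwarz. The first step is to recast both linearized systems in ``flux form'' using the equilibrium detailed-balance relations $\bar{p}^h_{n+1} = \bar{r}\,\bar{p}^h_n$ and $(1-\gamma)\,\bar{p}^c_{n+1} = \bar{r}\,\bar{p}^c_n$. Introducing the linearized up-fluxes
\[
\tilde F^h_n := \bar{r}\,w^h_n + r_{{\bf w}}\,\bar{p}^h_n - w^h_{n+1}, \qquad \tilde G^c_n := \bar{r}\,w^c_n + r_{{\bf w}}\,\bar{p}^c_n - (1-\gamma)\,w^c_{n+1},
\]
with the boundary convention $\tilde F^h_{-1} = \tilde G^c_{-1} = 0$, the systems \eqref{eq:w^c}--\eqref{eq:w^h} become discrete continuity equations $\frac{\dd}{\dd t} w^h_n = \tilde F^h_{n-1} - \tilde F^h_n$ and $\frac{\dd}{\dd t} w^c_n = \tilde G^c_{n-1} - \tilde G^c_n$. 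Passing to the weighted variables $u^h_n := w^h_n/\bar{p}^h_n$ and $u^c_n := w^c_n/\bar{p}^c_n$ and substituting $w^h_{n+1} = \bar{r}\,\bar{p}^h_n\,u^h_{n+1}$, the fluxes acquire the gradient-like structure $\tilde F^h_n = -\bar{r}\,\bar{p}^h_n\,(u^h_{n+1} - u^h_n) + r_{{\bf w}}\,\bar{p}^h_n$, and similarly $\tilde G^c_n = -\bar{r}\,\bar{p}^c_n\,(u^c_{n+1} - u^c_n) + r_{{\bf w}}\,\bar{p}^c_n$.

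Next, I would compute $\frac{\dd}{\dd t}\mathcal{E} = 2\,n_h\sum_{n\geq 0} u^h_n\,\frac{\dd}{\dd t}w^h_n + 2\,n_c\sum_{n\geq 0} u^c_n\,\frac{\dd}{\dd t}w^c_n$ and apply Abel summation to each inner sum, converting it to $\sum_{n\geq 0}(u_{n+1}-u_n)\,\tilde F_n$. Two telescoping identities, which follow from the detailed-balance relations and the mass-conservation constraints $\sum_n w^h_n = \sum_n w^c_n = 0$ built into $\mathcal{A}$, then intervene:
\[
\sum_{n\geq 0}\bar{p}^h_n\,(u^h_{n+1}-u^h_n) = -\frac{w^h_0}{\bar{r}}, \qquad \sum_{n\geq 0}\bar{p}^c_n\,(u^c_{n+1}-u^c_n) = -\frac{(1-\gamma)\,w^c_0}{\bar{r}}.
\]
Combining these with the identity $r_{{\bf w}} = -n_h\,w^h_0 - n_c\,(1-\gamma)\,w^c_0$ (also coming from $\mathcal{A}$) collapses the time derivative into
\[
\frac{\dd}{\dd t}\mathcal{E} = -2\,\bar{r}\,\bigl(n_h\,D^h + n_c\,D^c\bigr) + \frac{2\,r_{{\bf w}}^2}{\bar{r}},
\]
where $D^h := \sum_n \bar{p}^h_n(u^h_{n+1}-u^h_n)^2$ and $D^c := \sum_n \bar{p}^c_n(u^c_{n+1}-u^c_n)^2$ are the Dirichlet forms associated with the two sub-systems.

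The final step is to show that the positive remainder $2\,r_{{\bf w}}^2/\bar{r}$ is dominated by the Dirichlet dissipation. Jensen's inequality with the probability weights $(n_c, n_h)$ gives $r_{{\bf w}}^2 \leq n_h\,(w^h_0)^2 + n_c\,(1-\gamma)^2\,(w^c_0)^2$, while Cauchy-Schwarz applied to the two telescoping identities above yields $(w^h_0)^2 \leq \bar{r}^2\,D^h$ and $(1-\gamma)^2\,(w^c_0)^2 \leq \bar{r}^2\,D^c$ (using $\sum_n \bar{p}^h_n = \sum_n \bar{p}^c_n = 1$). Combining these bounds produces $r_{{\bf w}}^2 \leq \bar{r}^2\,(n_h\,D^h + n_c\,D^c)$, which exactly matches the dissipation coefficient and gives the announced $\frac{\dd}{\dd t}\mathcal{E} \leq 0$.

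The main technical obstacle is that the mean-value constraint defining $\mathcal{A}$ is a \emph{joint} constraint coupling ${\bf w}^h$ and ${\bf w}^c$, so Lemma \ref{lem:elementary}---which additionally requires the single-species condition $\sum_n n\,y_n = 0$---cannot be invoked on either perturbation in isolation to control $(w^h_0)^2$ or $(w^c_0)^2$. What saves the analysis is that the weaker Cauchy-Schwarz estimates above already produce exactly the prefactor $\bar{r}^2$ needed to match the Dirichlet dissipation coefficient, so the cancellation is sharp enough for monotonicity; Lemma \ref{lem:elementary} would then be the natural tool to upgrade this monotonicity into a quantitative exponential decay rate for $\mathcal{E}$.
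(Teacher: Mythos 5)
Your proof is correct, and it reaches the conclusion by a genuinely different route than the paper. The paper first derives the production identity \eqref{eq:roc}, in which the dissipation is written as $\sum_{n}(w_{n+1}-w_n)^2/\bar{p}_n$ with explicit boundary terms $(w^h_0)^2$, $(w^c_0)^2$; it then bounds $r_{\bf w}^2$ by the same Jensen step you use, and finally controls $(w^h_0)^2$ and $(w^c_0)^2$ via Lemma \ref{lem:elementary} applied to the \emph{increment} sequences $y_0=w_0$, $y_n=w_n-w_{n-1}$, which produces exactly the constants $\tfrac{\bar r}{1+\bar r}$ and $\tfrac{\bar r}{1-\gamma+\bar r}$ needed to cancel the dissipation. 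You instead pass to the weighted variables $u_n=w_n/\bar{p}_n$, obtain the Dirichlet-form identity $\frac{\dd}{\dd t}\mathcal{E}=-2\bar r\,(n_hD^h+n_cD^c)+2r_{\bf w}^2/\bar r$ (which I have checked against the linearized system, including the telescoping identities $\sum_n\bar p^h_n(u^h_{n+1}-u^h_n)=-w^h_0/\bar r$ and its cheater analogue), and then replace Lemma \ref{lem:elementary} by a one-line Cauchy--Schwarz on those telescoping identities, yielding $(w^h_0)^2\le\bar r^2D^h$ and $(1-\gamma)^2(w^c_0)^2\le\bar r^2D^c$ with precisely the prefactor needed for cancellation. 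This is a genuine simplification: it makes the whole argument self-contained and bypasses the auxiliary lemma. As a cross-check, your identity agrees with the paper's \eqref{eq:roc} up to rearrangement and an overall factor of $2$ (the factor appears to have been dropped in \eqref{eq:roc}; it is harmless for the sign argument).

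One correction to your closing commentary: Lemma \ref{lem:elementary} \emph{is} applicable to each sub-population separately, and the paper does invoke it. The lemma is applied not to ${\bf w}^h$ itself but to the increments $y_n=w^h_n-w^h_{n-1}$ (with $y_0=w^h_0$), for which the hypotheses $\sum_n y_n=0$ and $\sum_n n\,y_n=0$ reduce, after Abel summation, to $w^h_n\to 0$ and the single-species mass conservation $\sum_n w^h_n=0$; the joint mean-value constraint in $\mathcal{A}$ plays no role there. So your concern about the coupling of the mean constraint is unfounded as a criticism of the lemma-based route --- you simply found a way not to need it.
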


\begin{proof}
After lengthy but routine computations, we obtain
\begin{equation}\label{eq:roc}
\begin{aligned}
\frac{\dd}{\dd t} \mathcal{E}\left[({\bf w}^c, {\bf w}^h)\right] &= -n_h\,\sum_{n\geq 0} \frac{\left(w^h_{n+1}-w^h_n\right)^2}{\bar{p}^h_n} - n_c\,(1-\gamma)\,\sum_{n\geq 0} \frac{\left(w^c_{n+1}-w^c_n\right)^2}{\bar{p}^c_n} \\
&\qquad + n_h\,(w^h_0)^2 + n_c\,(1-\gamma)\,(w^c_0)^2 + \frac{r^2_{{\bf w}}}{\bar{r}}.
\end{aligned}
\end{equation}
Since $r^2_{{\bf w}} = \left(n_c\,(1-\gamma)\,w^c_0 + n_h\,w^h_0\right)^2 \leq n_c\,(1-\gamma)^2\,(w^c_0)^2 + n_h\,(w^h_0)^2$, we deduce that
\begin{equation}\label{eq:i0}
\begin{aligned}
\frac{\dd}{\dd t} \mathcal{E}\left[({\bf w}^c, {\bf w}^h)\right] &\leq -n_h\,\sum_{n\geq 0} \frac{\left(w^h_{n+1}-w^h_n\right)^2}{\bar{p}^h_n} - n_c\,(1-\gamma)\,\sum_{n\geq 0} \frac{\left(w^c_{n+1}-w^c_n\right)^2}{\bar{p}^c_n} \\
&\qquad + n_h\,\frac{1+\bar{r}}{\bar{r}}\,(w^h_0)^2 + n_c\,(1-\gamma)\,\frac{1-\gamma+\bar{r}}{\bar{r}}\,(w^c_0)^2.
\end{aligned}
\end{equation}
Invoking Lemma \eqref{lem:elementary} with $y_0 = w^h_0$, $y_n = w_n - w_{n-1}$ for $n\geq 1$, and $p_n = \bar{p}^h_n$ gives rise to
\[(w^h_0)^2 \leq (1-\bar{r})\,\bar{r}^2\,\left[\frac{(w^h_0)^2}{\bar{p}^h_n} + \sum_{n\geq 1} \frac{\left(w^h_n-w^h_{n-1}\right)^2}{\bar{p}^h_n}\right],\] or equivalently
\begin{equation}\label{eq:i1}
(w^h_0)^2 \leq \frac{\bar{r}}{1+\bar{r}}\,\sum_{n\geq 1} \frac{\left(w^h_n-w^h_{n-1}\right)^2}{\bar{p}^h_{n-1}}.
\end{equation}
A similar consideration also yields the estimate
\begin{equation}\label{eq:i2}
(w^c_0)^2 \leq \frac{\bar{r}}{1-\gamma+\bar{r}}\,\sum_{n\geq 1} \frac{\left(w^c_n-w^c_{n-1}\right)^2}{\bar{p}^c_{n-1}}.
\end{equation}
Finally, inserting the estimates \eqref{eq:i1} and \eqref{eq:i2} into \eqref{eq:i0} allows us to conclude the proof.
\end{proof}

\begin{remark}
We speculate that the linearized energy $\mathcal{E}\left[({\bf w}^c, {\bf w}^h)\right]$ might decay exponentially fast in time, at least for certain choices of the model parameters (i.e., $n_h$, $\gamma$ and $\mu$). Indeed, in the special case when $n_h = 1$ (or equivalently when $n_c = 0$), quantitative exponential decay of the (linearized) energy $\mathcal{E}$ can be established for small enough $\mu$ thanks to a weighted Poincar\'e-type inequality satisfied by the geometric distribution ${\bf p}$ \cite{cao_uncovering_2022}. However, under the apparently more sophisticated settings considered in this work, we fail to obtain a quantitative (exponentially fast in time) convergence guarantee for the energy $\mathcal{E}\left[({\bf w}^c, {\bf w}^h)\right]$ towards zero, partly due to the potential lack of a weighted Poincar\'e-type for the convex combination of two geometric distributions $\bar{p} = n_h\,\bar{p}^h + n_c\,\bar{p}^c$.
\end{remark}

\section{Fraud induced wealth inequality}\label{sec:4}
\setcounter{equation}{0}

In this section, we would like to explore the presence of probabilistic cheaters on the inequality (measured by the well-known Gini index) of the equilibrium wealth distribution. To be more precise, we intend to investigate the Gini index of equilibrium distribution $\bar{{\bf p}}$ as a function of $\gamma \in [0,1)$ (probability of cheating for a probabilistic cheater) when other model parameters (i.e., $\mu$, $n_h$ and $n_c$) are frozen. For this purpose we first recall the definition of Gini index:

\begin{definition}[\textbf{Gini index}]
For a probability distribution ${\bf p} \in \mathcal{P}(\mathbb N)$ with a finite mean value $\mu \in \mathbb{R}_+$, the Gini index of ${\bf p}$ is defined via
\begin{equation}\label{def1:Gini}
G[{\bf p}] = \frac{1}{2\,\mu} \sum\limits_{i\in \mathbb N}\sum\limits_{j \in \mathbb N} |i-j|\,p_i\,p_j.
\end{equation}
\end{definition}

The Gini index $G$ is a widely recognized measure of (wealth) inequality which quantifies the disparity in a one-dimensional probability distribution. It ranges from 0 (representing perfect equality) to 1 (indicating extreme inequality). We now provide an explicit expression for the Gini index $G[\bar{{\bf p}}]$ of the equilibrium distribution.

\begin{proposition}\label{prop:Gini_equili}
For $\bar{{\bf p}} = n_c\,\bar{{\bf p}}^c + n_h\,\bar{{\bf p}}^h$, we have
\begin{equation}\label{eq:Gini_equili}
G[\bar{{\bf p}}] = 1 - \frac{1}{\mu}\,\left[\frac{n^2_c\,\bar{r}^2}{(1-\gamma)^2 - \bar{r}^2} + \frac{n^2_h\,\bar{r}^2}{1 - \bar{r}^2} + \frac{2\,n_c\,n_h\,\bar{r}^2}{1-\gamma - \bar{r}^2}\right],
\end{equation}
in which $\bar{r}$ is given by \eqref{eq:r_bar}.
\end{proposition}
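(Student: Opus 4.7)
The plan is to reduce the Gini double sum to evaluating $\sum_{i,j} \min(i,j)\,\bar{p}^x_i\,\bar{p}^y_j$ for pairs of geometric distributions, and then assemble using the mixture structure $\bar{{\bf p}} = n_c\,\bar{{\bf p}}^c + n_h\,\bar{{\bf p}}^h$. First, I would rewrite the Gini index via the standard identity $|i-j| = i + j - 2\min(i,j)$; combined with $\sum_{i} \bar{p}_i = 1$ and the mean constraint $\sum_{i} i\,\bar{p}_i = \mu$ (Lemma \ref{lem:invariant}), a short calculation gives the convenient reformulation
\[G[\bar{{\bf p}}] \;=\; 1 - \frac{1}{\mu}\,\sum_{i,j \in \mathbb N} \min(i,j)\,\bar{p}_i\,\bar{p}_j.\]

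Next, I would expand $\bar{{\bf p}}$ inside the double sum to split it into four (actually three, by symmetry) cross terms:
\[\sum_{i,j} \min(i,j)\,\bar{p}_i\,\bar{p}_j \;=\; n_c^2\,S_{cc} + 2\,n_c\,n_h\,S_{ch} + n_h^2\,S_{hh},\]
with $S_{xy} \coloneqq \sum_{i,j} \min(i,j)\,\bar{p}^{x}_i\,\bar{p}^{y}_j$ for $x,y \in \{c,h\}$. The core computational step is to evaluate $S_{xy}$ when $\bar{{\bf p}}^x$ and $\bar{{\bf p}}^y$ are geometric with ratios $a, b \in (0,1)$. Using the layer-cake identity $\min(i,j) = \sum_{k\geq 1} \mathbbm{1}\{i \geq k\}\,\mathbbm{1}\{j \geq k\}$ together with the tail formula $\sum_{i\geq k}(1-a)\,a^i = a^k$, the double sum collapses to the elementary geometric series
\[\sum_{k\geq 1} a^k\,b^k \;=\; \frac{a\,b}{1-a\,b}.\]

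Finally, I would specialize the three cases using Proposition \ref{prop:equil}: $(a,b) = (\bar{r}, \bar{r})$ for $S_{hh}$, $(a,b) = (\bar{r}/(1-\gamma), \bar{r}/(1-\gamma))$ for $S_{cc}$, and $(a,b) = (\bar{r}, \bar{r}/(1-\gamma))$ for $S_{ch}$. After multiplying numerator and denominator by $(1-\gamma)^2$ in $S_{cc}$ and by $(1-\gamma)$ in $S_{ch}$, the three denominators simplify to $1-\bar{r}^2$, $(1-\gamma)^2 - \bar{r}^2$ and $1-\gamma-\bar{r}^2$ respectively, reproducing \eqref{eq:Gini_equili} exactly.

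I do not foresee a serious obstacle: the derivation is essentially bookkeeping once the layer-cake identity reduces matters to summing geometric series. The only point worth verifying is that both ratios $\bar{r}$ and $\bar{r}/(1-\gamma)$ lie strictly in $(0,1)$, so that all geometric sums converge and the tail identities are legitimate; this is guaranteed by the constraint $\bar{r} \in (0, 1-\gamma)$ established in Proposition \ref{prop:equil}.
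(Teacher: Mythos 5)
Your proof is correct and is essentially the paper's argument in disguise: the paper invokes the identity $G[{\bf p}] = 1 - \frac{1}{\mu}\sum_{n\geq 0}(1-F_n)^2$ and your $\min$-reformulation is exactly the same identity, since $\sum_{i,j}\min(i,j)\,p_i\,p_j = \sum_{k\geq 1}\bigl(\sum_{i\geq k}p_i\bigr)^2 = \sum_{n\geq 0}(1-F_n)^2$. The only difference is that you re-derive this auxiliary formula via the layer-cake decomposition rather than citing it, after which the mixture expansion into three geometric series is identical.
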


\begin{proof}
The key ingredient for the proof lies in the following alternative formula for the Gini index \eqref{def1:Gini} (see for instance \cite{cao_gini_2024}): for a probability mass function ${\bf p} \in \mathcal{P}(\mathbb N)$ with a finite mean value $\mu \in \mathbb{R}_+$, it holds that \[G[{\bf p}] = 1 - \frac{1}{\mu}\,\sum_{n\geq 0} (1-F_n)^2,\] where $F_n = \sum_{i\leq n} p_i$ for all $i\in \mathbb N$ represents the cumulative distribution function associated to the probability distribution ${\bf p}$. Denoting by $\{F^c_n\}_{n \in \mathbb N}$ and $\{F^h_n\}_{n \in \mathbb N}$ the cumulative distributions associated to the geometric distributions $\bar{{\bf p}}^c$ and $\bar{{\bf p}}^h$ \eqref{eq:equilibria}, respectively. Since we have
\begin{equation*}
F^h_n = (1-\bar{r})\,\sum_{k\leq n} \bar{r}^k = 1-\bar{r}^{n+1} ~\textrm{and}~ F^c_n = \left(1-\frac{\bar{r}}{1-\gamma}\right)\,\sum_{k\leq n} \left(\frac{\bar{r}}{1-\gamma}\right)^k = 1-\left(\frac{\bar{r}}{1-\gamma}\right)^{n+1}
\end{equation*}
for each $n \in \mathbb N$, the advertised formula for \eqref{eq:Gini_equili} $G[\bar{{\bf p}}]$ follows immediately from the observation that $\bar{F}_n = n_c\,F^c_n + n_h\,F^h_n$, where $\{\bar{F}_n\}_{n \in \mathbb N}$ denotes the cumulative distribution associated to the distribution $\bar{{\bf p}}$.
\end{proof}

Our basic economical intuition suggests that, the Gini index of the equilibrium distribution $\bar{{\bf p}}$ might be monotone increasing as the parameter $\gamma \in [0,1)$ increases, while other model parameters $\mu \in \mathbb{R}_+$ and $n_c \in (0,1)$ (or $n_h \in (0,1)$) are kept fixed. In other words, in the non-degenerate case where both types of agents are present (i.e., $0 < n_c,n_h < 1$), the wealth inequality inherent in the distribution $\bar{{\bf p}}$ is accentuated as we increase the probability of cheating (i.e., $\gamma$) for probabilistic cheaters. Loosely speaking, we speculate that enhanced tendency of cheating for probabilistic cheaters will lead to greater wealth inequality in the equilibrium distribution. Unfortunately, although the expression \eqref{eq:Gini_equili} for $G[\bar{{\bf p}}]$ is entirely explicit (which depends on the collection of model parameters $\mu$, $n_c$ or $n_h$, and $\gamma$), due to the rather complicated dependence of $\bar{r}$ \eqref{eq:r_bar} on $\mu$, $n_h$ and $\gamma$, it is quite difficult to draw useful information from the (computations of) partial derivatives of $G[\bar{{\bf p}}]$ with respect to these model parameters. Instead, we resort to numerical evidences in support of our intuition regarding the behavior of the Gini index $G[\bar{{\bf p}}]$ as $\gamma \in [0,1)$ varies (while holding other parameters unchanged).

Figure \ref{fig:Gini_gamma} displays the dependence of $G[\bar{{\bf p}}]$ on $\gamma \in [0,1)$ for a finite collection of fixed $n_h \in \{0.2,0.4,0.6,0.8\}$ and $\mu = \{5,10\}$, where we discretized the interval $[0,1)$ into 100 equally spaced sample points ranging from 0 to 0.99. It can be readily observed that the Gini index $G[\bar{{\bf p}}]$ indeed exhibits the conjectured monotonicity as we increase $\gamma$.

\begin{figure}[!htb]
  \begin{subfigure}{0.47\textwidth}
    \centering
    \includegraphics[scale=0.6]{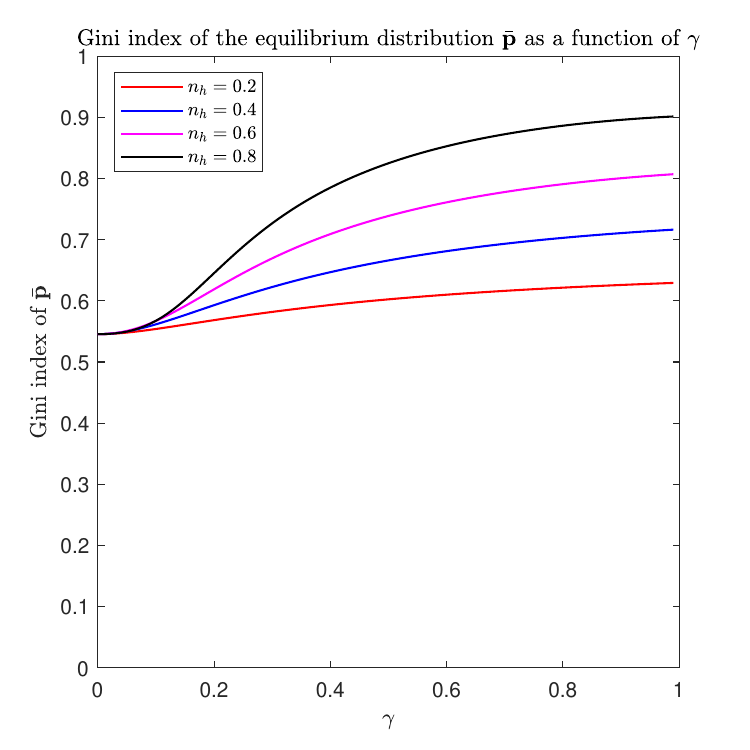}
  \end{subfigure}
  \hspace{0.1in}
  \begin{subfigure}{0.47\textwidth}
    \centering
    \includegraphics[scale=0.6]{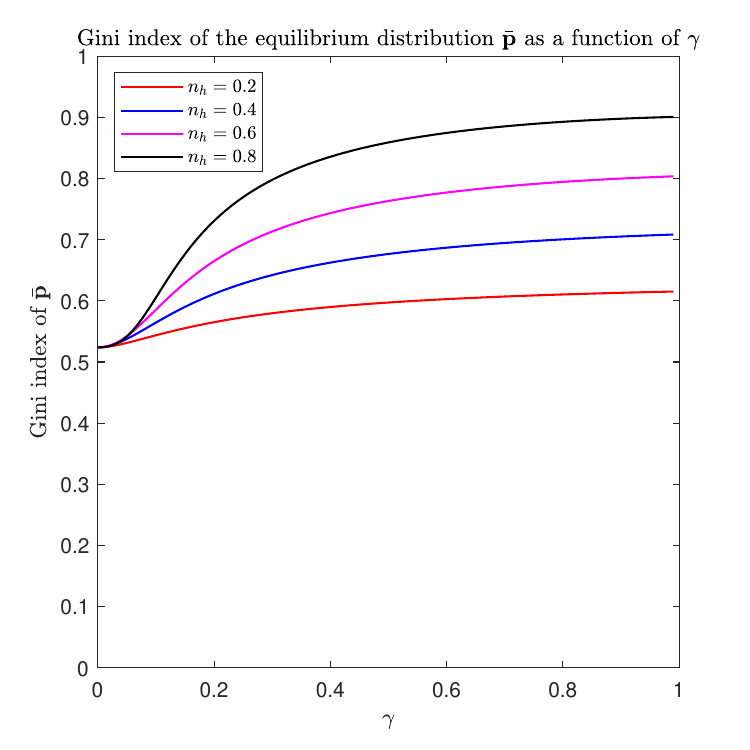}
  \end{subfigure}
  \caption{{\bf Left}: The Gini index $G[\bar{{\bf p}}]$ of the equilibrium distribution as a function of $\gamma$ for various choices of $n_h$ and $\mu = 5$. {\bf Right}: The Gini index $G[\bar{{\bf p}}]$ of the equilibrium distribution as a function of $\gamma$ for various choices of $n_h$ and $\mu = 10$. In both scenarios, the Gini index $G[\bar{{\bf p}}]$ is monotone increasing as $\gamma$ increases.}
  \label{fig:Gini_gamma}
\end{figure}

\section{Conclusion}\label{sec:5}
\setcounter{equation}{0}

In this manuscript, we investigated a variant of the classical Bennati-Dragulescu-Yakovenko (BDY) game (inspired by the very recent work \cite{blom_hallmarks_2024}) where the concept of probabilistic cheaters is introduced into the model set-up. In particular, the economic system departs from the classical framework via the introduction of heterogeneity among agents. Although the presence of another type of agents (i.e., probabilistic cheaters) leads to additional complication in the description and analysis of the agent-based model, we managed to carry out a large time analysis for the deterministic mean-field ODE system associated to the random asset-exchange model, via a discovery of a novel entropy-like functional which generalizes the usual maximum entropy principle satisfied by geometric distributions.

To the best of our of knowledge, generalizations of the basic BDY model where the fundamental assumption of indistinguishable agents is removed (and two distinct types of agents are involved) have not been studied extensively in the econophysics literature, and the model examined in this manuscript along with the one in a recent study \cite{blom_hallmarks_2024}, represents a novel and challenging direction for further research in econophysics. For instance, introducing probabilistic cheaters into other closely related variants of the BDY game (where the game might be biased towards richer or poorer agents \cite{cao_derivation_2021}) could be an interesting avenue for further exploration.

Lastly, we emphasize that the present work also leaves many intricate and intriguing questions that are worth investigating systematically in future research endeavors. To name a few, is it possible to establish a quantitative convergence guarantee for the $\HH$ functional $\HH[({\bf p}^c, {\bf p}^h)]$ ? Can we show rigorously the monotonicity of the Gini index of the equilibrium distribution $G[\bar{{\bf p}}]$ with respect to $\gamma \in [0,1)$ (with fixed $\mu > 0$ and $n_h \in (0,1)$) ? What if we assume that the likelihood of cheating $\gamma \in [0,1)$ for a given probabilistic cheater actually depend on the wealth of the agent (i.e., $\gamma = \gamma(n)$, which might be assumed to be non-decreasing if we speculate that richer agents exhibit diminished honesty) ? We believe that answers to these questions will lead to an improved understanding of the effect of introducing probabilistic cheaters and their role in accentuating or mitigating wealth inequality within our artificial economic society.

\end{document}